\documentclass[leqno,12pt]{article}
\usepackage{latexsym}
\usepackage{amsmath}
\usepackage{amssymb}
\usepackage{enumerate}
\usepackage{graphicx}
\usepackage{tikz}
\usepackage[all]{xy}
\usepackage{placeins}
\usetikzlibrary{decorations.pathreplacing}

\usepackage{theorem}
\newtheorem{theorem}{Theorem}[section]
\newtheorem{proposition}[theorem]{Proposition}
\newtheorem{lemma}[theorem]{Lemma}
\newtheorem{corollary}[theorem]{Corollary}

\theorembodyfont{\rmfamily}
\newtheorem{proof}{\textmd{\textit{Proof.}}}

\newtheorem{remark}[theorem]{Remark}

\newtheorem{definition}[theorem]{Definition}

\makeatletter

\@addtoreset{equation}{section}
\makeatother

\newcommand{\qedd}{\hfill \Box}
\newcommand{\ve}{\varepsilon}

\newcommand{\wt}{\widetilde}

\newcommand{\ol}{\overline}

\newcommand{\B}{\ensuremath{\mathbb{B}}}
\newcommand{\C}{\ensuremath{\mathbb{C}}}

\newcommand{\N}{\ensuremath{\mathbb{N}}}
\newcommand{\R}{\ensuremath{\mathbb{R}}}

\newcommand{\Sph}{\ensuremath{\mathbb{S}}}

\newcommand{\cD}{\ensuremath{\mathcal{D}}}
\newcommand{\cE}{\ensuremath{\mathcal{E}}}

\newcommand{\cL}{\ensuremath{\mathcal{L}}}

\def\Ar{\mathop{\mathrm{Area}}\nolimits}

\def\dist{\mathop{\mathrm{dist}}\nolimits}
\def\Ind{\mathop{\mathrm{Ind}}\nolimits}

\def\Vol{\mathop{\mathrm{Vol}}\nolimits}

\def\Scal{\mathop{\mathit{Sc}}\nolimits}

\def\Int{\mathop{\mathrm{Int}}\nolimits}

\title{
\Large{Curvature at Infinity Governs the Topology 
of Complete Non-Compact Surfaces 
Admitting Schr\"odinger Operators 
of Finite Index}\footnote{
2020 Mathematics Subject Classification: 
Primary 53C20, 58J05;
Secondary 35J10, 35P15.}
\footnote{Key words and phrases: 
Schr\"odinger operator, 
finite Morse index, Fischer-Colbrie metric, 
radial curvature, Busemann function.}
}

\author{Hideaki HARUMOTO    
and Kei KONDO\footnote{Supported 
by the Grant-in-Aid for Scientific Research (C), 
JSPS KAKENHI Grant Number 22K03288.}
}

\date{\today}

\begin{document}
\maketitle

\begin{abstract}
In this article, we investigate the global topology 
of a complete non-compact Riemannian 
$2$-manifold $\Sigma$ admitting 
a Schr\"odinger operator with non-negative 
potential and finite Morse index. 
While classical results of Fischer-Colbrie 
classify such manifolds under the assumption 
of vanishing index or geometric stability 
as immersed minimal surfaces 
in a Riemannian $3$-manifold, 
we show that the curvature at infinity 
$\lambda_\infty^*(\Sigma)$ 
of the Fischer-Colbrie metric $g^*$---a complete 
conformal metric determined by a positive 
function furnished by Fischer-Colbrie's theorem---governs 
the global topology and geometric rigidity of $\Sigma$ 
without imposing either assumption.

More precisely, 
we derive a fundamental identity relating 
$\lambda_\infty^*(\Sigma)$ 
to the area growth of $(\Sigma,g^*)$, 
show that all critical points of the distance function 
$d_p^*$ from a fixed base point $p$ 
are confined to a bounded region, and, as a corollary, 
obtain a quantitative bound for the number of ends. 
We further distinguish two complementary 
geometric viewpoints. 
On the one hand, a quantitative condition on $\lambda_\infty^*(\Sigma)$ 
forces $\Sigma$ to be diffeomorphic 
to the Euclidean plane $\R^2$. 
On the other hand, when $\Sigma$ has exactly one end, another condition on 
$\lambda_\infty^*(\Sigma)$ guarantees that every Busemann function on $(\Sigma,g^*)$ 
is an exhaustion. 
By clarifying the relationship between these two 
regimes---the critical-point structure of distance 
functions relative to a base point and the global behavior of Busemann functions 
at infinity---we exhibit two complementary manifestations of 
how $\lambda_\infty^*(\Sigma)$ controls the global geometry and topology of $\Sigma$.
\end{abstract}

\section{Introduction}\label{sec1}%%%%%%%%%%%%%%%%%%%%%%%%%%%%%%%%%%%%%%%%%%%%%%%%%%%%%%%%%%%%%%%%%%%%%%%%%%%%%%%%%%%%%%%%%%%%%%%%%%%%%%%%%%%%%%%%%%%%%%%%%%%%%%%%%%%%%%%%%%%%%%%%%%%%%%%%%%%%%%%%%%%%%%%%%%%%%%%%

In her seminal article \cite{F-C1985}, 
Fischer-Colbrie initiated a systematic study 
of Schr\"odinger operators on Riemannian manifolds, motivated by the analysis of the Jacobi operator 
for two-sided minimal surfaces immersed 
in Riemannian $3$-manifolds. 
More precisely, let $(\Sigma,g)$ be a 
(connected) complete non-compact Riemannian 
$2$-manifold, 
where $g$ denotes its Riemannian metric, 
and consider the Schr\"odinger operator 
defined by
\begin{equation}\label{2025_12_01_Schrodinger}
\cL_g:= \Delta_g- K_g +q. 
\end{equation}
Here $\Delta_g$ is the Laplacian operator 
on $C^\infty(\Sigma)$, 
$K_g$ denotes the Gaussian curvature 
of $(\Sigma, g)$, and the potential $q$ belongs 
to $C^\infty(\Sigma) \cap L^\infty(\Sigma)$, 
where $C^\infty(\Sigma)$ and 
$L^\infty(\Sigma)$ denote the spaces 
of smooth functions and essentially 
bounded measurable functions 
on $\Sigma$, respectively. 
If the Morse index of $\cL_g$, denoted 
by $\Ind(\cL_g)$, is finite, 
then she established the following 
fundamental structure theorem:

\begin{theorem}{\rm (\cite{F-C1985})}\label{FCmetric}
If $\Ind(\cL_g) < \infty$, 
then the following assertions hold:
\begin{enumerate}[{\rm (1)}]
\item There exist a compact subset 
$C \subset \Sigma$ and a positive function 
$u \in C^\infty(\Sigma)$ such that 
$\Ind(\cL_g|_{\Sigma \setminus C})=0$ 
and $\cL_g(u) =0$ on $\Sigma \setminus C$. 
\item If, in addition, the potential $q$ is 
non-negative on $\Sigma$, 
then the conformally deformed metric 
$g^*:= u^2g$ satisfies the following properties.
\begin{enumerate}[{\rm (2-1)}]
\item $(\Sigma, g^*)$ is complete and 
its Gaussian curvature $K^*:= K_{g^*}$ 
is non-negative on $\Sigma \setminus C$.
\item $\Sigma$ is finitely connected, 
that is, $\Sigma$ is homeomorphic to 
$N \setminus \{p_1, \dots, p_\ell\}$, 
where $N$ is a compact $2$-manifold and 
$p_1, \dots, p_\ell$ are finitely many points in $N$.
\item The total curvature of $(\Sigma,g^*)$, 
denoted by $c^*(\Sigma)$, satisfies
\[
-\infty<c^*(\Sigma):=
\int_\Sigma K^*\,d{\rm vol}^*
\le2\pi\chi(\Sigma),
\]
and hence is finite. 
Here $d{\rm vol}^*$ denotes the Riemannian 
volume measure associated with $g^*$, 
and $\chi(\Sigma)$ denotes the Euler characteristic 
of $\Sigma$. 
\end{enumerate}
\end{enumerate}
\end{theorem}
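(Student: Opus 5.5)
We follow Fischer-Colbrie's original strategy for Theorem \ref{FCmetric}: first produce a positive solution of $\cL_g u=0$ outside a compact set, and then show that the conformal metric $u^2 g$ behaves like a complete surface of non-negative curvature near infinity.

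\textbf{Step 1: the function $u$ and assertion (1).}
Since $\Ind(\cL_g)<\infty$, the $L^2$-eigenfunctions of $-\cL_g$ with negative eigenvalues span a finite-dimensional space $V$. Exhausting $\Sigma$ by compact domains, a standard min-max argument shows that $-\cL_g$ is non-negative on test functions supported away from a sufficiently large compact set $C_0$. Hence $\Ind(\cL_g|_{\Sigma\setminus C_0})=0$.

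On an exhaustion $\Omega_1\subset\Omega_2\subset\cdots$ of $\Sigma$ by smooth domains containing $C_0$, the first Dirichlet eigenvalue of $-\cL_g$ on $\Omega_k\setminus \overline{C_0}$ is non-negative. We would enlarge $C_0$ slightly to a compact set $C$ so that it becomes strictly positive. Then we solve $\cL_g u_k=0$ on $\Omega_k\setminus C$ with $u_k=1$ on $\partial C$ and $u_k=0$ on $\partial\Omega_k$. The maximum principle, which holds by positivity of the eigenvalue, gives $u_k>0$ and monotonicity in $k$. A Harnack inequality then yields a positive limit $u$ solving $\cL_g u=0$ on $\Sigma\setminus C$. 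Finally, we extend $u$ smoothly and positively across $C$.

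\textbf{Step 2: curvature of $g^*$ and finite connectivity.}
For $g^*=u^2g$ one has $K^*=u^{-2}\bigl(K_g-\Delta_g\log u\bigr)$. On $\Sigma\setminus C$,
\[
\Delta_g\log u=\frac{\Delta_g u}{u}-\frac{|\nabla u|^2}{u^2}=K_g-q-|\nabla\log u|^2 .
\]
Therefore
\[
K^*=u^{-2}\bigl(q+|\nabla\log u|^2\bigr)\ge 0,
\]
using $q\ge0$. This is the curvature part of (2-1).

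Given completeness, $(\Sigma,g^*)$ is a complete surface with $K^*\ge0$ outside a compact set. Then the negative part of $K^*$ is compactly supported, so $\int (K^*)^-\,d{\rm vol}^*<\infty$. Huber's theorem (or Cohn-Vossen's) then gives (2-2), namely finite connectivity. Cohn-Vossen's inequality applied to $(\Sigma,g^*)$ gives (2-3), that is, $c^*(\Sigma)\le 2\pi\chi(\Sigma)$, where the integral is well defined with value in $(-\infty,\infty)$.

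\textbf{Step 3: completeness of $g^*$ (main obstacle).}
The hard step is showing that $g^*$ is complete, i.e.\ that $\int_\gamma u\,ds_g=\infty$ for every divergent curve $\gamma$. We would use Fischer-Colbrie's stability argument with test functions $u\varphi$. Since $\cL_g u=0$, for compactly supported $\varphi$ on $\Sigma\setminus C$ one has the identity
\[
-\int u\varphi\,\cL_g(u\varphi)=\int u^2|\nabla\varphi|^2 ,
\]
which is conformally invariant in dimension two, and it controls the $g^*$-geometry.

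Arguing by contradiction, suppose a divergent curve $\gamma$ has finite $g^*$-length. We would consider $g^*$-geodesic balls around points of $\gamma$ far out. Then we would combine the non-negativity of $K^*$ there, which yields area and volume comparison for $g^*$, with the positivity of the operator $-\Delta_{g^*}+K^*-\cdots$ written in the $g^*$-metric. The aim is to derive a contradiction from the existence of a short $g^*$-path to the ideal boundary. Concretely, we follow Fischer-Colbrie's route via the Schoen–Yau / Gulliver–Lawson style estimate: a stable operator $-\Delta+K-q$ with $q\ge0$ forces the conformal factor to blow up along incomplete ends.

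If this direct route stalls, a fallback is to pass to the universal cover of an end. There we can use that, once $\lambda_1>0$ for the relevant operator, the operator $\Delta-aK$ with $a\le1$ has a positive solution on an incomplete disc, and this yields a contradiction with the completeness of $g$.
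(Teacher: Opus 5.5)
\textbf{Overall.} The paper does not prove this statement; it quotes it from Fischer-Colbrie's paper, so the comparison is with her argument. Your Steps 1 and 2 follow that argument and are essentially fine, with two routine fixes in Step 1. First, normalize $u_k$ at a fixed point before applying Harnack, since the increasing sequence may blow up. Second, choose $C$ with smooth boundary before extending $u$. Your curvature identity $K^*=u^{-2}(q+|\nabla\log u|^2)$ is correct.

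\textbf{The gap: Step 3.} Completeness of $g^*$ is the core of the theorem, and Step 3 does not prove it. Huber's theorem and Cohn-Vossen's inequality both require completeness, so without it (2-1), (2-2) and (2-3) all remain unproved. Moreover, your sketched route, combining $K^*\ge 0$ with stability of the operator, cannot work. Take the hyperbolic disc $g=4|dz|^2/(1-|z|^2)^2$ and $u=(1-|z|^2)/2$. Then $\Delta_g u=-2u^2$, so $\cL_g u=0$ with $q=-|z|^2$. The operator is stable, since it has a positive solution, and $K^*\equiv 0$. Yet $g^*=|dz|^2$ is the incomplete flat disc. What rules this out is not the sign of $K^*$ but the quantitative bound furnished by $q\ge 0$:
\[
K^*=u^{-2}\bigl(q+|\nabla\log u|^2\bigr)\ \ge\ |\nabla^*\log u|^2_{g^*}.
\]
This bound fails in the example precisely because $q=-|\nabla\log u|_g^2$ there.

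\textbf{The missing argument.} It is one-dimensional. Suppose $g^*$ is incomplete. Compactness of $g$-balls produces a unit-speed $g^*$-geodesic $\gamma:[0,\ell)\to\Sigma$ with $\ell<\infty$ that is minimizing between any two of its points and leaves every compact set. It arises as a limit of $g^*$-shortest curves from a fixed point to $\partial B^g_R$ as $R\to\infty$. Fix an interval $(\ell-x_0,\ell)$ on which $\gamma$ avoids $C$, and set $f:=\log u\circ\gamma$. The second variation formula and the bound above give
\[
\int\varphi'^2\,dt\ \ge\ \int K^*(\gamma)\,\varphi^2\,dt\ \ge\ \int f'^2\varphi^2\,dt
\]
for every Lipschitz $\varphi$ with compact support in $(\ell-x_0,\ell)$. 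On the other hand, completeness of $g$ forces
\[
L_g(\gamma)=\int_0^\ell e^{-f}\,dt=\infty .
\]
Now use a Hardy-type test function. In the variable $x=\ell-t$, let $\varphi=\sqrt{x}$ on $[\varepsilon,x_1]$ (with $x_1<x_0$), cut off logarithmically on $[\varepsilon^2,\varepsilon]$ and by a fixed cutoff beyond $x_1$. This gives
\[
\int_\varepsilon^{x_1}x\,f_x^2\,dx\le\tfrac13\log\tfrac1\varepsilon+O(1).
\]
Cauchy--Schwarz against $\int_\varepsilon^{x_1}dx/x$ then yields $e^{-f(\ell-\varepsilon)}\le C\varepsilon^{-1/\sqrt3}$. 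This is integrable in $\varepsilon$, contradicting $L_g(\gamma)=\infty$.

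Neither volume comparison in $g^*$-balls (which need not be relatively compact when $g^*$ is incomplete) nor your universal-cover fallback, as described, supplies this estimate.
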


In the special case of vanishing index, 
Fischer-Colbrie obtained the following 
complete classification:

\begin{theorem}{\rm (\cite{F-C1985})}\label{FCmetric_ap1}
If $\Ind(\cL_g)=0$ and $q\ge0$, 
then $\Sigma$ is conformally equivalent 
to the complex plane $\mathbb C$, 
is a flat cylinder, or is a flat M\"obius strip. 
In the latter two cases, $q\equiv0$.
\end{theorem}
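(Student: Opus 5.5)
The plan is to follow Fischer-Colbrie's argument, working with the metric $g^*=u^2g$ from Theorem~\ref{FCmetric} but now globally. Since $\Ind(\cL_g)=0$, the compact set $C$ in Theorem~\ref{FCmetric}(1) may be taken empty. Concretely, I would use the standard Fischer-Colbrie--Schoen equivalence: non-negativity of the operator $-\cL_g$ on $C_c^\infty(\Sigma)$ is equivalent to the existence of a positive solution $u>0$ of $\cL_g u=0$ on all of $\Sigma$. I would obtain it by solving Dirichlet problems on an exhaustion and normalizing at a point, using Harnack.

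Then I compute the curvature of $g^*=u^2g$. In dimension two, $K^*=u^{-2}(K_g-\Delta_g\log u)$. With $\Delta_g u=(K_g-q)u$ we get
\[
\Delta_g\log u = K_g-q-|\nabla\log u|^2,
\]
and hence
\[
K^*=u^{-2}\bigl(q+|\nabla \log u|^2\bigr)\ge 0
\]
on all of $\Sigma$. Completeness of $g^*$ is supplied by Theorem~\ref{FCmetric}(2-1).

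Now $(\Sigma,g^*)$ is complete with $K^*\ge0$. By Huber's theorem, $\Sigma$ is conformally a compact Riemann surface minus finitely many points. By Cohn-Vossen, $\int K^*\le 2\pi\chi(\Sigma)$, so $\chi(\Sigma)\ge0$. Thus $\Sigma$ is a plane, a cylinder, or a Möbius strip (topologically), or, for non-orientable surfaces, their quotient versions. Since $g^*$ is conformal to $g$, conformal types agree. In the simply connected case, a complete metric with $K^*\ge0$ is parabolic, because its area growth is at most quadratic. Hence $\Sigma$ is conformally $\C$ rather than the disk.

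In the remaining case $\chi(\Sigma)=0$, the structure theory of complete non-negatively curved surfaces (Cohn-Vossen plus the splitting theorem, since such a surface with two ends contains a line, or whose double cover does) forces $g^*$ to be flat. Therefore $K^*\equiv0$, which gives $q\equiv0$ and $\nabla u\equiv0$. So $u$ is constant, $g$ is homothetic to $g^*$, and $g$ itself is a flat cylinder or flat Möbius strip.

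The main obstacle is the first step: producing a globally positive $u$ with $\cL_g u=0$ from $\Ind=0$. This needs the Fischer-Colbrie--Schoen construction. One takes the first Dirichlet eigenfunctions on the exhaustion domains $\Omega_i$, whose eigenvalues $\lambda_1(\Omega_i)$ decrease to some limit $\ge0$, solves $\cL_g u_i=0$ with suitable boundary data or perturbation, and passes to the limit using Harnack and elliptic estimates to get $u>0$. A secondary subtlety is rigidity in the cylinder/Möbius case. The splitting argument must be carried out on the orientable double cover and then descended, checking that constancy of $u$ passes to the quotient.
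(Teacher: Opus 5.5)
The paper does not prove this statement; it quotes it from Fischer-Colbrie \cite{F-C1985}, so there is no internal proof to compare against. Your argument is correct and follows the standard Fischer-Colbrie--Schoen route: a global solution $u>0$ of $\cL_g u=0$, the identity $K^*=u^{-2}(q+|\nabla\log u|^2)\ge 0$ on all of $\Sigma$, Cohn-Vossen to force $\chi(\Sigma)\in\{0,1\}$, parabolicity in the planar case, and $K^*\equiv0\Rightarrow q\equiv0,\ \nabla u\equiv0$ in the remaining cases.

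\textbf{The main difficulty is elsewhere.} Existence of a global $u>0$ is the routine Fischer-Colbrie--Schoen equivalence, and your sketch of it is adequate. The real analytic content is the completeness of $u^2g$, which you take from Theorem~\ref{FCmetric}~(2-1). That step is legitimate only if (2-1) holds for every pair $(C,u)$ as in (1), in particular for $C=\emptyset$ and your global $u$. Fischer-Colbrie's completeness argument, by second variation along a divergent $g^*$-minimizing geodesic of finite $g^*$-length, does apply to any positive solution of $\cL_g u=0$ on $\Sigma\setminus C$. The statement as quoted, however, only furnishes some compact $C$, which this paper even enlarges to have non-empty interior. You should state this reading explicitly.

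\textbf{The case $\chi(\Sigma)=0$ is simpler than you make it.} It needs neither the splitting theorem nor the orientable double cover. The Cohn-Vossen inequality you already invoked gives
\[
0\le\int_\Sigma K^*\,d{\rm vol}^*\le 2\pi\chi(\Sigma)=0,
\]
so the continuous non-negative function $K^*$ vanishes identically.

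\textbf{A minor point.} The phrase ``or their quotient versions'' is superfluous. Since $\chi(\Sigma)=\chi(N)-\ell$ with $\ell\ge1$ and $\chi(N)\le2$, the only possibilities are the plane, the cylinder, and the M\"obius strip.
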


%%%Part2%%%

\noindent
The geometric significance of this analytic framework 
is illustrated by its application to minimal surface theory. 
Let $(M,\langle\,\cdot\,,\,\cdot\,\rangle)$ 
be a $3$-dimensional Riemannian manifold, 
where $\langle\,\cdot\,,\,\cdot\,\rangle$ 
denotes its Riemannian metric, and 
$
f: (\Sigma, g) 
\to (M, \langle \,\cdot\,, \,\cdot\, \rangle)
$ 
a two-sided minimal immersion with 
$g= f^*\langle \,\cdot\,, \,\cdot\, \rangle$. 
Using the classical Schoen--Yau rearrangement argument, 
we can recast the Jacobi operator $J_\Sigma$ 
of the immersion $f$ in the standard Schr\"odinger form:
\[
J_\Sigma = \Delta_\Sigma - K_\Sigma +\frac{1}{2}\Scal_M \circ f + \frac{1}{2}\|A\|^2,
\]
where $\Scal_M$ denotes the scalar curvature 
of $M$ and $A$ stands for the second 
fundamental form of $(\Sigma, g)$. 
By setting $q := (\Scal_M \circ f + \|A\|^2) / 2$, 
it follows that if $f$ is stable 
($\Ind(J_\Sigma) = 0$) and 
$(M, \langle \,\cdot\,, \,\cdot\, \rangle)$ 
has $\Scal_M \ge 0$, then $q$ is automatically 
non-negative. 
Consequently, Theorem \ref{FCmetric_ap1} immediately implies that $\Sigma$ is conformally equivalent to the complex plane $\C$, 
is a flat cylinder, or is a flat M\"obius strip, 
thereby recovering the classical rigidity 
conclusion within the present 
Schr\"odinger-operator framework.

\bigskip

Throughout this article, 
the conformally deformed metric $g^*$ introduced 
in Theorem \ref{FCmetric} will be referred to 
as the {\em Fischer-Colbrie metric}. 
A brief historical remark concerning this metric is in order. 
Originally, Fischer-Colbrie assumed $\Sigma$ 
to be {\em oriented} when deriving 
items (2-2) and (2-3) of Theorem \ref{FCmetric}, 
in order to apply Huber's classical result \cite{Hub}. 
However, this orientation assumption is in fact superfluous, as Huber's theorem holds 
in full generality without it. 
For a comprehensive and modern treatment of Huber's theorem and related topics, 
we refer the reader to the celebrated 
monograph \cite{SST} by Shiohama, Shioya, 
and Tanaka.

Under the notation and assumptions 
in Theorem \ref{FCmetric}, 
$\chi(\Sigma)$ is given by
\[
\chi(\Sigma)=\chi(N)-\ell.
\]
Thus, item (2-2) of Theorem \ref{FCmetric} shows 
that $\Sigma$ has finite topology and, 
in particular, finite Euler characteristic, 
while item (2-3) guarantees that its total 
curvature is finite. 
This finite-topology property will later be related 
to the critical-point theory of distance 
functions developed by Grove and 
Shiohama \cite{GS} 
(see item (4) of Theorem \ref{2025_11_27_thm1.4}). 
These classical results naturally raise 
the following question:
\begin{center}
{\em Can one control the topology, under the 
finite-index assumption, by suitable curvature conditions at infinity without imposing stability 
or index zero?}
\end{center}
The classical results of Fischer-Colbrie control 
the underlying topology through vanishing index 
or the geometric stability of immersed 
minimal surfaces in a Riemannian $3$-manifold. 

%%%Part3%%%

\bigskip

In sharp contrast to these classical developments, 
the main objective of this article is 
{\em to demonstrate that the curvature at 
infinity governs the topology of complete 
non-compact surfaces without assuming 
either vanishing index or the existence 
of a stable two-sided minimal immersion 
of $(\Sigma,g)$ into a Riemannian $3$-manifold}.

\bigskip

Before stating our main theorems,
we clarify the standing assumptions and notation. 
Throughout the remainder of this article, 
we assume that the complete non-compact 
Riemannian $2$-manifold $(\Sigma, g)$ satisfies 
\begin{equation}\label{2025_12_01_whole_assumption}
\text{$\Ind(\cL_g) < \infty$ \quad and \quad $q \ge 0$.}
\end{equation}
Under Eq.\,\eqref{2025_12_01_whole_assumption}, 
we fix a compact set $C$ and 
a positive function $u$ furnished 
by Theorem \ref{FCmetric}~(1). 
Enlarging $C$ if necessary, 
we may and do assume that 
$\Int C\neq\emptyset$. 
We then let $g^*=u^2g$ be the corresponding 
Fischer-Colbrie metric on $\Sigma$. 
We denote by $d^*$ the distance induced by $g^*$. 
For any $x \in \Sigma$ and $r>0$, 
$B^*_r(x) := \{y \in \Sigma \mid d^*(x, y)< r\}$ 
denotes the open metric ball in $(\Sigma, g^*)$, 
whose area is defined by 
$
\Ar (B^*_r(x)) := \int_{B^*_r(x)}d{\rm vol}^*
$. 
Furthermore, $K^*$ denotes, 
as in Theorem \ref{FCmetric}~(2-1), 
the Gaussian curvature of $(\Sigma,g^*)$, 
and the curvature at infinity $\lambda_\infty^*(\Sigma)$ of $(\Sigma,g^*)$ 
is defined by
\begin{equation}\label{curvature_at_infinity}
\lambda_\infty^*(\Sigma)
:=\lambda_\infty(\Sigma,g^*)
:=2\pi\chi(\Sigma)-c^*(\Sigma),
\end{equation}
where $c^*(\Sigma)$ denotes the 
total curvature of $(\Sigma,g^*)$. 
Note that by Cohn-Vossen's theorem \cite[Satz 6]{CV1}, $\lambda_\infty^*(\Sigma)\ge 0$. 

\bigskip

Our first main result is the following:

%%%Part4%%%

\begin{theorem}{\rm (Fundamental Structure Theorem)}\label{2025_11_27_thm1.4} 
For any fixed $p \in \Int C$, 
there exists a non-compact model surface 
of revolution $\wt{\Sigma}$ 
with base point $\tilde{p}$, 
whose radial curvature function 
$\wt K$ is non-positive on $[0,\infty)$ 
and compactly supported, such that 
\begin{enumerate}[{\rm (1)}]
\item the radial curvature of $(\Sigma, g^*)$ 
at $p$ is bounded from below by $\wt{K}$, 
\item there exists a constant $\alpha^*\in[1/2,\infty)$, depending on $g^*$ and $\tilde g$, 
such that 
\begin{equation}\label{2025_11_27_thm1.4_eq1}
\lambda^*_\infty(\Sigma) 
= 
4 \alpha^* \pi \lim_{t\to \infty}
\frac{\Ar (B_t^* (p))}{\Ar (B_t (\tilde{p}))}
\end{equation}
holds, where $\tilde{g}$ denotes 
the Riemannian metric of $\wt{\Sigma}$, 
and $B_t (\tilde{p})$ denotes the open metric 
ball on $\wt{\Sigma}$ centered at $\tilde{p}$ 
with radius $t>0$, 
\item the total curvature $c(\wt{\Sigma})$ 
of $\wt{\Sigma}$ is finite and is given by 
\[
c(\wt{\Sigma}) = 2\pi (1 -2 \alpha^*),
\] 
\item there exists a radius $R>0$ 
such that $\ol{B^*_R(p)}$ contains both 
the compact set $C$ and all critical points 
of the distance function 
$d_p^*(\,\cdot\,) := d^*(p,\, \cdot\,)$ 
in the sense of Grove and Shiohama.
\end{enumerate}
\end{theorem}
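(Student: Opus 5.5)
Since $C$ is compact and $K^*\ge 0$ on $\Sigma\setminus C$ by Theorem \ref{FCmetric}~(2-1), the curvature of $(\Sigma,g^*)$ is bounded below by a constant only on a compact set. We exploit this as follows. Choose $R_0>0$ with $C\subset B^*_{R_0}(p)$ and set $\kappa:=\max\{0,-\min_{C}K^*\}$. We build $\wt K$ on $[0,\infty)$ as a smooth non-positive function with $\wt K(t)\le -\kappa$ on $[0,R_0]$, vanishing for $t\ge R_0+1$ and interpolating monotonically in between. Let $\tilde g=dt^2+m(t)^2d\theta^2$, with $m''+\wt K m=0$, $m(0)=0$, $m'(0)=1$. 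Since $\wt K\le 0$, $m$ is convex and positive on $(0,\infty)$, so $\wt\Sigma$ is a complete non-compact model surface.

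Item (1) is then immediate. At any $x$ on a minimal geodesic from $p$ with $d^*(p,x)=t$, either $x\in C$, which forces $t<R_0$ and $K^*(x)\ge-\kappa\ge\wt K(t)$, or $x\notin C$, where $K^*(x)\ge0\ge\wt K(t)$.

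For item (3), note that $m'' = -\wt K m\ge0$ with $\wt K m$ compactly supported, so $m'(t)$ is constant for $t\ge R_0+1$; call this constant $2\alpha^*-1$. Gauss–Bonnet on $B_t(\tilde p)$ gives $c(\wt\Sigma)=\lim_{t\to\infty}2\pi(1-m'(t))=2\pi(1-(2\alpha^*-1))=2\pi(2-2\alpha^*)$. This does not match the stated formula $2\pi(1-2\alpha^*)$, so the normalization must be adjusted. The natural choice is to let $\alpha^*$ absorb a factor, say $m'(\infty)=2\alpha^*$, which gives $c(\wt\Sigma)=2\pi(1-2\alpha^*)$. Because $m'(\infty)\ge m'(0)=1$, this yields $\alpha^*\ge 1/2$, exactly the stated range, so I take $2\alpha^*:=m'(\infty)=1-\int_0^\infty \wt K m\,dt$. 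Finiteness of $c(\wt\Sigma)$ follows from compact support of $\wt K$. One can also tune the amount of negative curvature so that the comparison works, which is what makes $\alpha^*$ depend on both $g^*$ and $\tilde g$.

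For item (2), I would invoke Shiohama's area-growth formula for finitely connected complete surfaces with finite total curvature (via Huber and Theorem \ref{FCmetric}~(2-2),(2-3)):
\[
\lim_{t\to\infty}\frac{\Ar(B^*_t(p))}{t^2/2}=\lambda^*_\infty(\Sigma)=2\pi\chi(\Sigma)-c^*(\Sigma).
\]
On the model, $m(t)=2\alpha^* t+b$ for large $t$, so $\Ar(B_t(\tilde p))=2\pi\int_0^t m\sim 2\pi\alpha^* t^2$. Dividing the two asymptotics gives
\[
\lim_{t\to\infty}\frac{\Ar(B^*_t(p))}{\Ar(B_t(\tilde p))}=\frac{\lambda^*_\infty(\Sigma)}{4\pi\alpha^*},
\]
which is \eqref{2025_11_27_thm1.4_eq1}.

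Item (4) is the main obstacle. The approach is to use the Toponogov comparison theorem for radial curvature bounded below by a model, in the form of Itokawa–Machigashi–Shiohama or Kondo–Tanaka, applied to the model surface built in (1). Outside $B^*_{R}(p)$ the model is flat (a cone) and the sectors where the comparison is valid have angle at least $\pi$. The steps are:

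1. Suppose $q$ is a critical point of $d_p^*$ with $d^*(p,q)$ large.
2. Take a ray $\gamma$ from $p$ and a minimal segment $\sigma$ from $q$ whose initial direction makes angle at most $\pi/2$ with $\gamma$.
3. Form the geodesic triangle $(p,q,\gamma(s))$ and compare it with a triangle in $\wt\Sigma$.
4. Let $s\to\infty$. For large distances the model behaves like a Euclidean cone with $m'=2\alpha^*$, so the comparison angle at $\tilde q$ tends to something strictly greater than $\pi/2$ once $d^*(p,q)$ exceeds a threshold depending on $R_0$ and $\alpha^*$.
5. This contradicts criticality, so all critical points lie in $\ol{B^*_R(p)}$.

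The delicate points are verifying that the comparison triangles exist in the model (this needs the model to have no pair of cut points, i.e. $m$ increasing, which holds here) and making the estimate quantitative. The final $R$ is taken to be the larger of $R_0$ and this threshold.
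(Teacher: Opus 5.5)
Your items (1)–(3) are correct and follow the paper's argument. The case split $x\in C$ versus $x\notin C$ is a slightly leaner form of the paper's cutoff construction. Your normalization $2\alpha^*=m'(\infty)$ is exactly the paper's $\alpha^*=\lim_{t\to\infty}t^{-2}\int_0^t m$. Dividing the two quadratic asymptotics also gives the existence of the limit in (2) directly, without the volume-comparison monotonicity the paper quotes.

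\textbf{The gap is in item (4).} Criticality of $q$ compares directions \emph{at $q$}; it says nothing about $\gamma'(0)$. For a segment $\sigma$ from $q$ to $\gamma(s)$, it only furnishes \emph{some} segment $\tau$ from $q$ to $p$ with $\angle(\tau'(0),\sigma'(0))\le\pi/2$, and $\tau$ must be the side $qp$ of your triangle.

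Theorem~\ref{new_TCT} then gives two facts: $\angle(\tilde p\tilde q\tilde x_s)\le\pi/2$ and $\angle(\tilde q\tilde p\tilde x_s)\le\angle(qp\gamma(s))$. The model triangle lies in the sector at $\tilde p$, whose total curvature is $\angle(\tilde q\tilde p\tilde x_s)(1-2\alpha^*)$, and $\wt K\le0$. So Gauss–Bonnet yields only $\angle(\tilde p\tilde q\tilde x_s)\ge\pi-2\alpha^*\angle(\tilde q\tilde p\tilde x_s)-\angle(\tilde p\tilde x_s\tilde q)$. Consequently you reach a contradiction only if the angle at $p$ between $\tau$ and $\gamma$ is below about $\pi/(4\alpha^*)$.

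Nothing controls that angle. $\tau$ depends on $\sigma$, hence on $\gamma$. If $q$ sits in another end, or on the far side of $p$ from $\gamma$, the model angle at $\tilde q$ can be close to $0$. Choosing $\gamma$ near a segment from $p$ to $q$ does not help either: a critical point has at least two such segments, and criticality selects one you do not control. So the claim in your step 4 is false. The comparison angle at $\tilde q$ does not exceed $\pi/2$ merely because $d^*(p,q)$ passes a threshold depending on $R_0$ and $\alpha^*$. A single ray and a single critical point cannot produce the contradiction.

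\textbf{The missing ingredient is a compactness argument over directions at $p$.} This is Gromov's argument, which the paper imports from Kondo–Tanaka by ruling out a sequence of critical points escaping to infinity. One way to carry it out:
\begin{enumerate}[{\rm (i)}]
\item Take critical points $q_i$ with $d_{i+1}\ge 2d_i$, where $d_i:=d^*(p,q_i)$. For $i<j$, fix a segment $\sigma$ from $q_i$ to $q_j$.
\item Apply Theorem~\ref{new_TCT} twice: once to the triangle whose side $q_ip$ is the segment given by criticality of $q_i$ for $\sigma'(0)$, and once to a triangle built from arbitrary segments $\gamma_i,\gamma_j$ from $p$.
\item In the rotationally symmetric Cartan–Hadamard model, both comparison triangles coincide, since they are determined by their side lengths. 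So this one triangle has angle $\le\pi/2$ at $\tilde q_i$ and angle $\le\angle(\gamma_i'(0),\gamma_j'(0))$ at $\tilde p$.
\item Combine the Gauss–Bonnet bound above with the CAT(0) estimate $\angle(\tilde p\tilde q_j\tilde q_i)\le\arcsin(d_i/d_j)\le\pi/6$. This forces $\angle(\gamma_i'(0),\gamma_j'(0))\ge\pi/(6\alpha^*)$ for \emph{all} choices of segments.
\end{enumerate}
Infinitely many pairwise $\pi/(6\alpha^*)$-separated directions cannot fit in $(\Sph^1_p)^*$. This bounds the critical set and yields $R$.
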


\medskip

The fundamental structure theorem also leads 
to the following estimate for the number of ends, 
whose proof requires a separate 
comparison-geometric argument.

\begin{corollary}\label{2026_08_02_end_estimate}
Let $\alpha^*$ be the constant furnished 
by Theorem~\ref{2025_11_27_thm1.4}. 
Then the number of ends of $\Sigma$ 
is at most $4\alpha^*$.
\end{corollary}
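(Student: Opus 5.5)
We plan to compare the asymptotic behaviour of the distance spheres $S^*_t(p):=\{x\mid d_p^*(x)=t\}$ with those of the model $\wt\Sigma$. By Theorem~\ref{2025_11_27_thm1.4}~(4), $d_p^*$ has no critical points outside $\ol{B^*_R(p)}$. The isotopy lemma of Grove and Shiohama then shows that for every $t>R$ the set $\Sigma\setminus B^*_t(p)$ is homeomorphic to $S^*_t(p)\times[t,\infty)$. Moreover, $S^*_t(p)$ is a disjoint union of finitely many topological circles. Hence the number $\ell$ of ends of $\Sigma$ equals the number $k(t)$ of components of $S^*_t(p)$ for any fixed $t>R$, and it suffices to show $k(t)\le 4\alpha^*$ for some, or for all sufficiently large, $t$.

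For each end $E_i$, $i=1,\dots,\ell$, choose a ray $\gamma_i\colon[0,\infty)\to\Sigma$ from $p$ that eventually lies in $E_i$. Such rays exist because $E_i$ is unbounded and limits of minimal geodesics from $p$ to points of $E_i$ going to infinity are rays. Once $t>R$, distinct ends are separated by the compact set $\ol{B^*_R(p)}$. Any curve joining $\gamma_i(t)$ to $\gamma_j(t)$ with $i\ne j$ must therefore pass through $\ol{B^*_R(p)}$, which gives
\[
d^*(\gamma_i(t),\gamma_j(t))\ge 2(t-R).
\]
So the points $\gamma_1(t),\dots,\gamma_\ell(t)$ are pairwise almost maximally separated: the triangles $\triangle(p,\gamma_i(t),\gamma_j(t))$ are asymptotically degenerate, with the opposite side almost as long as the sum of the other two.

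Next we apply the radial-curvature Toponogov comparison theorem, valid by Theorem~\ref{2025_11_27_thm1.4}~(1), to each geodesic triangle $\triangle(p,\gamma_i(t),\gamma_j(t))$. This yields a comparison triangle in $\wt\Sigma$ with vertex $\tilde p$, side lengths $t,t$ and opposite side at least $2(t-R)$. The angle $\tilde\theta_{ij}(t)$ of this comparison triangle at $\tilde p$ is at most the angle $\angle_p(\dot\gamma_i(0),\dot\gamma_j(0))$. Since $\wt K$ is compactly supported and $c(\wt\Sigma)=2\pi(1-2\alpha^*)$ by Theorem~\ref{2025_11_27_thm1.4}~(3), $\wt\Sigma$ is outside a compact set isometric to the end of a flat cone of total angle $2\pi\cdot 2\alpha^*=4\alpha^*\pi$. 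In such a cone, two radial geodesics of length $t$ whose endpoints are at distance about $2t$ must subtend an angle of at least $\pi-o(1)$ as $t\to\infty$. Hence
\[
\angle_p(\dot\gamma_i(0),\dot\gamma_j(0))\ge\liminf_{t\to\infty}\tilde\theta_{ij}(t)\ge\frac{\pi}{2\alpha^*}\cdot\frac{1}{2}\cdot\ldots
\]
More precisely, we normalize by the cone angle: the angle measured in $T_p\Sigma$, with total angle $2\pi$, corresponds to cone angle scaled by $2\alpha^*$. The expected bound is $\angle_p\ge \pi/(2\alpha^*)$, meaning the directions of rays into distinct ends are $\pi/(2\alpha^*)$-separated in the unit circle at $p$. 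Order the initial directions $\dot\gamma_i(0)$ cyclically, so that the consecutive gaps sum to $2\pi$. Each gap is at least $\pi/(2\alpha^*)$, so $\ell\cdot\pi/(2\alpha^*)\le 2\pi$, that is, $\ell\le 4\alpha^*$. This is the desired bound; it is consistent with the $\alpha^*=1/2$ case, the plane, which has $\ell\le2$.

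The main obstacle is the middle step: converting the comparison triangle in $\wt\Sigma$ into a sharp lower bound on the angle at $p$. This requires controlling the geometry of $\wt\Sigma$ precisely, namely that $\tilde g=dr^2+f(r)^2d\theta^2$ with $f(r)=2\alpha^*(r-r_0)+\mathrm{const}$ for large $r$. It also requires checking that geodesic triangles in the model with nearly maximal opposite side have apex angle tending to the right value, bearing in mind that when the cone angle exceeds $2\pi$ such triangles may not be unique. I would first work this out explicitly in the flat cone end using the developing map, and then use the fact that the compactly supported curvature region contributes only $O(1)$ errors that disappear in the limit $t\to\infty$.
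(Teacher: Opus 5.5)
Your proposal is correct, and it departs from the paper exactly at the step you flag as the main obstacle.

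\textbf{Shared skeleton.} Both arguments use:
\begin{enumerate}[(i)]
\item rays from $p$ into distinct ends;
\item the elementary bound $d^*(\gamma_i(t),\gamma_j(t))\ge 2(t-R)$, which comes from a compact ball separating the ends (the paper records it as $a_i,b_i\ge i-\rho_0$);
\item the comparison triangle from Theorem~\ref{new_TCT} with $\delta_0=\pi$, available because the model is Cartan--Hadamard;
\item a packing argument on $(\Sph^1_p)^*$.
\end{enumerate}

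\textbf{How the paper gets the angle bound.} It runs a limiting argument that works for any Cartan--Hadamard model with integrable curvature. Lemma~\ref{2026_08_08_alexandrov_distance} keeps $\tilde\mu_i$ within $\rho_0$ of $\tilde p$, and a subsequence converges to a line. The Itoh--Tanaka first variation formula forces the base angles to $0$. Gauss--Bonnet together with $c(\wt Y)\le 0$ then gives $\tilde\omega_{\tilde p}\ge 2\pi^2/(2\pi-c(\wt\Sigma))=\pi/(2\alpha^*)$.

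\textbf{How your route differs.} You use the fact that the specific model of Theorem~\ref{2025_11_27_thm1.4} is exactly a flat cone of total angle $4\alpha^*\pi$ outside a compact set, and you read the apex angle off directly. This bypasses Alexandrov's lemma, the limit line, the first variation formula and the Gauss--Bonnet bookkeeping. The price is that your argument depends on the eventual conical form of the warping function, whereas the paper's bound only involves $c(\wt\Sigma)$.

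\textbf{Closing the step you left as an ``expected bound''.} It is short, and your worry about non-uniqueness of model triangles is moot: you only need an upper bound on model distances, and an explicit curve supplies it.
\begin{itemize}
\item Beyond the support of $\wt K$ one has $m(r)=2\alpha^*r+b$. In the coordinate $\sigma:=r+b/(2\alpha^*)$ the metric is $d\sigma^2+(2\alpha^*\sigma)^2d\theta^2$, so $(\sigma,\theta)\mapsto(\sigma\cos 2\alpha^*\theta,\sigma\sin 2\alpha^*\theta)$ is an isometry of the conical part of a sector onto a planar sector.
\item Put $s:=t+b/(2\alpha^*)$. Suppose the apex angle $\phi$ of the comparison triangle satisfies $\phi\le\pi/(2\alpha^*)-\ve$. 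The planar sector then has opening $2\alpha^*\phi\le\pi-2\alpha^*\ve$.
\item The chord joining the images of $\tilde x,\tilde y$ stays at distance at least $s\sin(\alpha^*\ve)$ from the vertex. Hence it lies in the conical part for large $t$.
\item Its length is $2s\sin(\alpha^*\phi)\le 2s\cos(\alpha^*\ve)$, and this is $<2(t-R)$ for large $t$.
\item This contradicts $\tilde d(\tilde x,\tilde y)=d^*(\gamma_i(t),\gamma_j(t))\ge 2(t-R)$. Therefore $\angle_p\ge\pi/(2\alpha^*)$, and your cyclic packing gives $\ell\le 4\alpha^*$.
\end{itemize}

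\textbf{Two minor points.} First, Theorem~\ref{new_TCT} requires $\angle_p<\pi$; if $\angle_p=\pi$ the bound holds trivially because $\alpha^*\ge 1/2$. Second, the isotopy lemma is more than you need: finiteness of the ends and a compact set separating them already follow from Theorem~\ref{FCmetric}~(2-2).
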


\begin{remark}
We make two remarks concerning 
Theorem \ref{2025_11_27_thm1.4}: 
\begin{enumerate}[{\rm (1)}]
\item The basic framework 
of radial curvature geometry---including 
the precise definitions of a non-compact 
model surface of revolution, radial curvature bounds, 
and related geometric concepts---will be reviewed 
in Section \ref{pre}. 
\item A point $q \in \Sigma \setminus \{p\}$ is said to be a 
{\em 
critical point of $d_p^*$ in the sense 
of Grove--Shiohama
}
\cite{GS} if for each tangent 
vector $v \in T_q \Sigma \setminus \{o_q\}$, 
there exists a unit-speed minimizing 
geodesic segment 
$
\gamma :[0,d_p^*(q)]\to(\Sigma,g^*)
$ 
emanating from $p=\gamma(0)$ 
to $q=\gamma(d_p^*(q))$ such that
\[
\angle^*
\Big(
-\gamma'(d_p^*(q)),v
\Big)
\le\frac{\pi}{2},
\]
where $\angle^*(u,v)$ denotes the angle 
between two non-zero vectors 
$u,v\in T_q\Sigma$. 
By convention, the base point $p$ itself is 
also designated as a critical point of $d_p^*$.
\end{enumerate}
\end{remark}

From this critical-point-theoretic perspective, 
Theorem \ref{2025_11_27_thm1.4}~(4) 
provides a critical-point-theoretic counterpart 
to the finite topological type of $\Sigma$: 
all non-trivial critical points of $d_p^*$ 
are confined within the compact metric ball 
$\ol{B_R^*(p)}$. 
In this light, the classical hypotheses discussed 
above---such as $\Ind(\cL_g)=0$ 
or the geometric stability of two-sided 
minimal surfaces immersed in a Riemannian 
$3$-manifold with non-negative scalar 
curvature---may be viewed, 
from the perspective developed here, 
as strong analytic conditions leading 
to particularly restrictive global topologies.

In what follows, for each $x \in \Sigma$, 
let $A_x^*$ denote the set of all unit vectors 
tangent to rays of $(\Sigma, g^*)$ emanating 
from $x$, and let $\mu$ denote 
the Lebesgue measure on the unit circle 
$(\Sph^1_x)^*:= \{v \in T_x\Sigma \mid \|v\|^*=1 \}$ 
with respect to the norm $\|v\|^*:=\sqrt{g^*_x (v, v)}$ for all $v \in T_x\Sigma$. 
Moreover, let $\tilde{g}$ denote 
the Riemannian metric of $\wt\Sigma$, 
and let $\wt{K}$ be its non-positive radial 
curvature function. 
Theorem \ref{2025_11_27_thm1.4}, 
together with Theorem \ref{2025_12_13_thm2.7} in Section \ref{pre}, 
leads to the following rigidity result: 
the curvature-at-infinity condition below eliminates 
all non-trivial critical points of $d_p^*$ 
and, consequently, forces $\Sigma$ to 
be diffeomorphic to $\R^2$.

\begin{theorem}\label{2025_11_27_thm1.6}
Let $(\wt\Sigma,\tilde g)$ and $\alpha^*$ be as 
in Theorem~\ref{2025_11_27_thm1.4}, 
and let $\wt K$ denote the radial curvature function of $\wt\Sigma$. 
If
\begin{equation}\label{2025_11_27_thm1.6_eq1}
\lambda^*_\infty(\Sigma) 
\ge 
2\alpha^* \pi 
\Big\{
2 - \exp \Big(\int_0^\infty t \wt{K}(t)dt \Big)
\Big\}, 
\end{equation}
then $d_p^*$ has no critical points 
on $\Sigma \setminus \{p\}$ 
in the sense of Grove--Shiohama. 
In particular, $\Sigma$ is diffeomorphic to $\R^2$. 
Moreover, setting $K^*_+:=\max\{0, K^*\}$, 
we have the estimate 
\begin{equation}\label{2025_11_27_thm1.6_eq2}
2\pi - \int_{\Sigma} K^*_+d{\rm vol}^* 
\le 
\inf_{x \in \Sigma} \mu (A^*_x) 
\le 
2\pi -c^*(\Sigma). 
\end{equation}
\end{theorem}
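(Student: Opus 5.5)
The plan is to combine the area-growth identity of Theorem \ref{2025_11_27_thm1.4}~(2) with the radial-curvature comparison of Theorem \ref{2025_12_13_thm2.7}. That comparison is a Toponogov-type result for surfaces whose radial curvature at $p$ is bounded below by a model $\wt\Sigma$. From it we will extract a lower bound on the measure $\mu(A^*_p)$ of ray directions at $p$, and then show that, under \eqref{2025_11_27_thm1.6_eq1}, rays leave every point of $\Sigma\setminus\{p\}$ in a way incompatible with Grove--Shiohama criticality.

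\textbf{Step 1: the model quantity.} First I would compute, for the model $\wt\Sigma$ whose curvature $\wt K\le 0$ has compact support, the quantity $\exp(\int_0^\infty t\wt K(t)\,dt)$. Writing the model metric as $dt^2+m(t)^2d\theta^2$ with $m''+\wt K m=0$, $m(0)=0$, $m'(0)=1$, we have $m'(\infty)=\lim_{t\to\infty}m(t)/t$. By item (3) of Theorem \ref{2025_11_27_thm1.4} this limit is tied to $c(\wt\Sigma)=2\pi(1-2\alpha^*)$, and I expect $m(\infty)$-type asymptotics to give $\Ar(B_t(\tilde p))\sim \alpha^* t^2\cdot(\text{const})$. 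The exponential factor should be the standard lower bound $m(t)/t\ge \exp(\int_0^t s\wt K(s)\,ds)$-type quantity, which measures how far model geodesics spread. I would use it via the comparison theorem to show that every direction at $p$ within a certain angular set is a ray direction.

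\textbf{Step 2: no critical points.} Suppose $q\ne p$ is critical for $d_p^*$. Take a ray $\sigma$ from $q$ (asymptotic to a ray from $p$), and form the triangle $(p,q,\sigma(s))$ with $s\to\infty$. Apply the Toponogov comparison (Theorem \ref{2025_12_13_thm2.7}) with the model, and use the criticality angle condition $\angle^*(-\gamma',\sigma')\le\pi/2$. This gives an excess estimate which, compared with the asymptotic opening angle of $(\Sigma,g^*)$, gives
\[
\lambda^*_\infty(\Sigma)<2\alpha^*\pi\Big\{2-\exp\Big(\int_0^\infty t\wt K\,dt\Big)\Big\}.
\]
Here the asymptotic opening angle is $\lambda^*_\infty$, expressed via \eqref{2025_11_27_thm1.4_eq1} as $4\alpha^*\pi\lim \Ar(B^*_t)/\Ar(B_t)$. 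This contradicts \eqref{2025_11_27_thm1.6_eq1}. The main obstacle is this quantitative comparison: showing the critical configuration forces the area ratio to be strictly less than $1-\tfrac12\exp(\int t\wt K)$. I would try first to bound $\Ar(B_t^*(p))$ above by integrating, over the ray directions at $p$, the length of comparison circles. Criticality at $q$ should force a sector of angle at least comparable to the exponential factor whose geodesics are non-minimizing beyond $d_p^*(q)$, and that sector would cut down the area growth.

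\textbf{Step 3: diffeomorphism and the $\mu(A^*_x)$ estimate.} With no critical points, the Grove--Shiohama isotopy lemma applied to $d_p^*$ shows $\Sigma$ is diffeomorphic to an open disk, hence to $\R^2$. In particular $\chi(\Sigma)=1$ and $\Sigma$ has one end. For \eqref{2025_11_27_thm1.6_eq2} I would invoke the classical Shiohama estimates for finitely connected surfaces with one end, valid for every $x$:
\[
\mu(A^*_x)\ge 2\pi\chi(\Sigma)-\int_\Sigma K^*_+\,d{\rm vol}^*,
\]
together with the upper bound $\inf_x\mu(A^*_x)\le \lambda^*_\infty(\Sigma)=2\pi\chi(\Sigma)-c^*(\Sigma)$. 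Substituting $\chi(\Sigma)=1$ yields \eqref{2025_11_27_thm1.6_eq2}.
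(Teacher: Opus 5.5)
\textbf{There is a genuine gap in Step 2.}

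\textbf{Step 2 is not carried out.} Step 2 contains the whole content of the theorem. You yourself call the key quantitative claim ``the main obstacle'': that a Grove--Shiohama critical point $q\neq p$ forces $\lim_{t\to\infty}\Ar(B_t^*(p))/\Ar(B_t(\tilde p))<1-\frac12\exp(\int_0^\infty t\wt K(t)\,dt)$. You then only describe what you would try. The sector heuristic is in the right spirit: criticality produces an angular gap of non-ray directions at $p$, and that gap cuts down area growth. As written, however, it does not do any of the following:
\begin{enumerate}
\item produce the exact constant;
\item show the resulting inequality is strict, which you need to contradict the non-strict hypothesis \eqref{2025_11_27_thm1.6_eq1};
\item justify that the geodesics in the gap stop minimizing ``beyond $d_p^*(q)$''. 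Only their failure to be rays matters, and even that must be derived from criticality.
\end{enumerate}

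\textbf{The root cause is a misreading of Theorem~\ref{2025_12_13_thm2.7}.} It is not a Toponogov-type comparison; that is Theorem~\ref{new_TCT}. It is the volume-growth rigidity theorem, and \eqref{2025_11_27_thm1.6_eq1} is literally its hypothesis:
\begin{enumerate}
\item For $m=2$ one has $F(r)=r/\pi$.
\item Since $\wt K\le 0$, $\wt K_-=\wt K$ and $\delta(\wt K_-)=\frac{\pi}{2}\exp(\int_0^\infty t\wt K(t)\,dt)$. Hence $1-F(\delta(\wt K_-))=1-\frac12\exp(\int_0^\infty t\wt K(t)\,dt)$.
\item Divide \eqref{2025_11_27_thm1.6_eq1} by $4\alpha^*\pi>0$ and use \eqref{2025_11_27_thm1.4_eq1}. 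This gives exactly $\lim_{t\to\infty}\Ar(B_t^*(p))/\Ar(B_t(\tilde p))\ge 1-F(\delta(\wt K_-))$.
\item The radial curvature bound comes from Theorem~\ref{2025_11_27_thm1.4}~(1), so Theorem~\ref{2025_12_13_thm2.7} applies.
\item Remark~\ref{2026_08_05_rem_critical_points} then upgrades its diffeomorphism conclusion to the absence of critical points of $d_p^*$ on $\Sigma\setminus\{p\}$. You need this remark, because the cited result as stated only gives the diffeomorphism.
\end{enumerate}

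\textbf{Step 1 also misplaces the exponential factor.} The bound $m(t)/t\ge\exp(\int_0^t s\wt K(s)\,ds)$ is vacuous here, since $m(t)\ge t$ and the right side is at most $1$. What matters is that $\frac{\pi}{2}\exp(\int_0^\infty t\wt K\,dt)$ is the angle $\delta(\wt K_-)$ entering $F$.

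\textbf{Step 3 is fine and matches the paper.} Once $d_p^*$ has no critical points, $\Sigma$ is a Riemannian plane. Proposition~\ref{SST_cor6.2.2} (your Shiohama bounds with $\chi(\Sigma)=1$) then yields \eqref{2025_11_27_thm1.6_eq2}.
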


\medskip\noindent
The relation between
Theorems~\ref{2025_11_27_thm1.4}
and~\ref{2025_11_27_thm1.6}
is illustrated schematically in
Figure~\ref{fig:rigidity}.

\begin{figure}[htbp]
\centering
\begin{tikzpicture}[scale=1, >=stealth]

% ==========================================
% Left Panel: General Case (Theorem 1.4)
% ==========================================

\begin{scope}[xshift=-3.5cm, yshift=-0.95cm]
% Base point 
\fill (0,0) circle (1.5pt) node[below, yshift= 0.7mm] {$p$};

% Small level curves near p
\draw[blue, thick] (0,0) circle (0.4);
\draw[blue, thick] (0,0) circle (0.8);

% Schematic critical-point structure
\draw[red, thick]
(0,1.3) .. controls (1.5,0.5) and (1.0,-0.8) .. 
(0,-0.8) .. controls (-1.0,-0.8) and (-1.5,0.5) .. (0,1.3);
\draw[red, thick]
(0,1.3) .. controls (1.0,2.0) and (0.8,2.7) .. (0,2.7) 
.. controls (-0.8,2.7) and (-1.0,2.0) .. (0,1.3);

% Possible critical point
\fill[red] (0,1.3) circle (1.5pt);
% Pointer line and label
\draw[red, thin, <-] (0.09,1.3) -- (1.5,2.0)
node[right, text=red] 
{\footnotesize possible critical point};

% Compact region containing all possible critical points
\draw[blue, thick] (0,0.95) ellipse (1.6 and 2.15);
\end{scope}

% Text Label at the bottom for Left Panel
\node[align=center] at (-3.5,-3.2)
{\textbf{General Case}\\
\footnotesize (Theorem \ref{2025_11_27_thm1.4})\\
\footnotesize Critical points, if any, are confined};

% ==========================================
% Right Panel: Rigidity Case (Theorem 1.6)
% ==========================================
\begin{scope}[xshift=3.5cm]
% Base point
\fill (0,0) circle (1.5pt) node[below, yshift= 0.75mm] {$p$};

% Nearly circular but slightly distorted nested level curves
\draw[blue, thick, smooth cycle, tension=0.95]
plot coordinates {
(0.00,0.40)
(0.30,0.24)
(0.33,-0.05)
(0.14,-0.30)
(-0.12,-0.34)
(-0.34,-0.12)
(-0.28,0.20)
};

\draw[blue, thick, smooth cycle, tension=0.95]
plot coordinates {
(0.00,0.80)
(0.55,0.48)
(0.68,0.02)
(0.42,-0.52)
(-0.06,-0.72)
(-0.56,-0.50)
(-0.70,-0.02)
(-0.42,0.58)
};
\draw[blue, thick, smooth cycle, tension=0.95]
plot coordinates {
(0.00,1.30)
(0.82,0.88)
(1.08,0.18)
(0.90,-0.58)
(0.22,-1.06)
(-0.58,-1.02)
(-1.02,-0.42)
(-0.92,0.52)
(-0.38,1.10)
};
\draw[blue, thick, smooth cycle, tension=0.95]
plot coordinates {
(0.00,1.70)
(1.02,1.18)
(1.38,0.26)
(1.20,-0.72)
(0.42,-1.38)
(-0.52,-1.42)
(-1.26,-0.86)
(-1.36,0.18)
(-0.86,1.22)
};
\draw[blue, thick, smooth cycle, tension=0.95]
plot coordinates {
(0.00,2.15)
(1.22,1.52)
(1.72,0.42)
(1.58,-0.78)
(0.82,-1.72)
(-0.28,-1.96)
(-1.28,-1.48)
(-1.82,-0.42)
(-1.62,0.88)
(-0.82,1.82)
};
\end{scope}

% Text Label at the bottom for Right Panel
\node[align=center] at (3.5,-3.2)
{\textbf{Under the Rigidity Condition}\\
\footnotesize (Theorem \ref{2025_11_27_thm1.6})\\
\footnotesize No non-trivial critical points\\
\footnotesize $\Sigma \cong \R^2$};
\end{tikzpicture}

\caption{
A purely schematic illustration of the transition 
from Theorem \ref{2025_11_27_thm1.4} 
to Theorem \ref{2025_11_27_thm1.6}. 
Theorem \ref{2025_11_27_thm1.4} confines 
all non-trivial critical points of $d_p^*$ to a 
compact region, whereas, 
under the curvature-at-infinity condition 
in Theorem \ref{2025_11_27_thm1.6}, 
no non-trivial critical points remain, 
and consequently $\Sigma$ is diffeomorphic 
to $\R^2$.
}
\label{fig:rigidity}
\end{figure}
\FloatBarrier

%%%Part5%%%

\begin{remark}\label{2026_08_08_rem_new1}
We place Theorem \ref{2025_11_27_thm1.6} 
in the broader context of recent developments 
in minimal hypersurface theory. 
Let
\[
f:(M^n,h)\to
(\R^{n+1},\langle\,\cdot\,,\cdot\,\rangle)
\]
be a complete two-sided minimal hypersurface 
with the induced metric $h=f^*\langle\,\cdot\,,\cdot\,\rangle$. 
For $n=2$, do Carmo and Peng \cite{dCP}, 
and independently Fischer-Colbrie and Schoen \cite{F-CS}, 
proved that stability forces $f(M)$ to be a hyperplane. 
The same rigidity has recently been established 
for $n=3$ by Chodosh and Li \cite{CL}, 
with another proof by Catino, Mastrolia, and Roncoroni 
\cite{CMR}; for $n=4$ by Chodosh, Li, Minter, and 
Stryker \cite{CLMS}; and for $n=5$ by Mazet \cite{Maz}. 
These developments highlight the close relationship between 
stable elliptic operators and global geometry. 
While these higher-dimensional results concern ambient rigidity, 
Theorem \ref{2025_11_27_thm1.6} instead yields intrinsic 
topological control under general finite-index constraints.
\end{remark}

While Theorem \ref{2025_11_27_thm1.6} 
gives a global rigidity result by eliminating 
all non-trivial critical points of the distance 
function $d_p^*$, a different geometric 
phenomenon arises when one turns 
to Busemann functions. 
To capture the geometry at infinity more directly, 
it is natural to replace the distance function 
from a fixed base point by a Busemann 
function associated with a geodesic ray.

The fundamental geometric distinction is 
that $d_p^*$ describes the geometry relative 
to the fixed base point $p$, 
whereas a Busemann function reflects 
the geometry along a designated geodesic ray. 
Even when all non-trivial critical points 
of $d_p^*$ are confined within a bounded region, 
as guaranteed by 
Theorem \ref{2025_11_27_thm1.4}~(4), 
a Busemann function on $(\Sigma,g^*)$ 
need not be an exhaustion. 
Without sufficient geometric control at infinity, 
its level sets need not be compact.

The following theorem provides a separate 
geometric condition that controls the global 
behavior of Busemann functions 
near infinity and, in particular, guarantees 
their exhaustion property. 
Thus, Theorem \ref{2025_11_27_thm1.6} 
and the theorem below describe two 
complementary manifestations 
of geometric control: the disappearance 
of non-trivial critical points of $d_p^*$ relative 
to a fixed base point, and the global exhaustion behavior 
of Busemann functions associated with geodesic rays. 

\begin{theorem}\label{2025_12_01_thm1.7}
Let $(\wt\Sigma,\tilde g)$ and $\alpha^*$ 
be as in Theorem~\ref{2025_11_27_thm1.4}. 
Assume that $\Sigma$ has exactly one end 
and that $\lambda^*_\infty(\Sigma)>0$. 
Define
\[
\beta^*
:=
\alpha^*
\lim_{t\to\infty}
\frac{\Ar(B_t^*(p))}
{\Ar(B_t(\tilde p))}.
\]
By Theorem~\ref{2025_11_27_thm1.4}~(2),
\[
\lambda_\infty^*(\Sigma)=4\pi\beta^*.
\]
Then $\beta^*>0$, and the following assertions hold:
\begin{enumerate}[{\rm (1)}]
\item If $\beta^*\in(0,1/4)$, 
then every Busemann function on $(\Sigma,g^*)$ is 
an exhaustion. 
Moreover, for each $\ve >0$, 
there exists a compact subset $W_\ve$ 
of $\Sigma$ such that 
\begin{equation}\label{2026_07_31_thm1.7_new1}
|\mu (A_x^*) - \lambda^*_\infty (\Sigma)| < \ve
\end{equation}
for all $x \in \Sigma \setminus W_\ve$. 
Furthermore, for an increasing exhaustion 
$\{V_i\}_{i\in\N}$ of $\Sigma$ by compact subsets, we have 
\begin{equation}\label{2026_07_31_thm1.7_new2}
\lim_{i\to\infty}
\frac{
\int_{V_i}\mu (A^*_x)\,d{\rm vol}^*
}
{
\int_{V_i}\,d{\rm vol}^*
}
=
\lambda_\infty^*(\Sigma). 
\end{equation}
\item If $\beta^*\in (0,\chi(\Sigma) / 2- 1/4)$,
then $\Sigma$ is homeomorphic to $\R^2$. 
In particular, 
Eq.\,\eqref{2025_11_27_thm1.6_eq2} holds. 
Moreover, every Busemann function 
on $(\Sigma,g^*)$ is an exhaustion.
\end{enumerate}
\end{theorem}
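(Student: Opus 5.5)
The plan is to reduce everything to known results on surfaces of finite total curvature with one end, using the identity $\lambda_\infty^*(\Sigma)=4\pi\beta^*$ from Theorem~\ref{2025_11_27_thm1.4}~(2). Positivity of $\beta^*$ is then immediate: $\lambda_\infty^*(\Sigma)>0$ forces $\beta^*=\lambda_\infty^*(\Sigma)/(4\pi)>0$. The key inputs are Shiohama's results on rays and Busemann functions on finitely connected surfaces with finite total curvature (see \cite{SST}). These are the following.

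\begin{enumerate}[(a)]
\item On a one-ended surface, every Busemann function is an exhaustion when $\lambda_\infty<2\pi$.
\item The mass $\mu(A_x)$ of ray directions tends to $\lambda_\infty$ as $x\to\infty$.
\item On the average over an exhaustion, the mean of $\mu(A_x)$ equals $\lambda_\infty$.
\end{enumerate}

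We apply these to the complete metric $g^*$, which has finite total curvature $c^*(\Sigma)$ by Theorem~\ref{FCmetric}~(2-3) and is finitely connected by (2-2).

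For (1), the hypothesis $\beta^*<1/4$ translates to $\lambda_\infty^*(\Sigma)=4\pi\beta^*<\pi<2\pi$. With exactly one end, Shiohama's theorem then gives that every Busemann function of $(\Sigma,g^*)$ is an exhaustion. For Eq.~\eqref{2026_07_31_thm1.7_new1} I would invoke the asymptotic result $\lim_{x\to\infty}\mu(A_x^*)=\lambda_\infty^*(\Sigma)$ for one-ended surfaces with $\lambda_\infty<2\pi$. One could re-derive it via Gauss--Bonnet applied to thin sectors bounded by rays from $x$, together with the fact that the curvature outside a large compact set is small in total. This identity is the step where one genuinely needs $\lambda_\infty<2\pi$ and a single end. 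The set $W_\ve$ is then a large closed ball $\ol{B_{R_\ve}^*(p)}$. Eq.~\eqref{2026_07_31_thm1.7_new2} follows by a Ces\`aro-type argument. Split $\int_{V_i}$ into $V_i\cap W_\ve$ and $V_i\setminus W_\ve$. The first part is bounded, since $\mu\le 2\pi$ and $W_\ve$ has finite area, while $\Ar(V_i)\to\infty$: $\Sigma$ is non-compact and complete with finite total curvature, hence has at least linear area growth. The second part is within $\ve$ of $\lambda_\infty^*(\Sigma)$ on average. Letting $i\to\infty$ and then $\ve\to0$ gives the limit.

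For (2), note that $\beta^*>0$ and $\beta^*<\chi(\Sigma)/2-1/4$ force $\chi(\Sigma)>1/2$, so $\chi(\Sigma)\ge1$. Since $\Sigma=N\setminus\{p_1\}$ has one end, $\chi(\Sigma)=\chi(N)-1\le 1$. Hence $\chi(N)=2$, so $N=\Sph^2$ and $\Sigma$ is homeomorphic to $\R^2$. Then $\chi(\Sigma)=1$ and $\beta^*<1/4$, so part (1) yields the exhaustion property. For Eq.~\eqref{2025_11_27_thm1.6_eq2}, I would prove the two inequalities directly rather than through Theorem~\ref{2025_11_27_thm1.6}.
\begin{itemize}
\item The upper bound $\inf_x\mu(A_x^*)\le 2\pi-c^*(\Sigma)=\lambda_\infty^*(\Sigma)+2\pi\chi(\Sigma)-2\pi$ follows from (1). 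With $\chi(\Sigma)=1$ the right side is exactly $\lambda_\infty^*(\Sigma)$, and (1) shows $\mu(A_x^*)$ approaches $\lambda_\infty^*(\Sigma)$ at infinity.
\item The lower bound $\mu(A_x^*)\ge2\pi-\int K_+^*$ for every $x$ is the classical Maeda/Shiohama estimate on a plane. Each complementary sector of the ray directions at $x$ contains a geodesic loop-free region whose boundary consists of two asymptotic rays. Gauss--Bonnet on that region then bounds the angle deficit by the positive curvature enclosed, and summing over the disjoint sectors gives the estimate.
\end{itemize}

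The main obstacle I expect is the asymptotic identity in (1) for $\mu(A_x^*)$. It requires $\lambda_\infty<2\pi$ (here guaranteed with margin) and a careful sector Gauss--Bonnet argument. Care is also needed because $K^*$ may be negative inside $C$, so only $\int|K^*|<\infty$ can be used, not a sign condition.
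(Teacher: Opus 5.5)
Your route is essentially the paper's: $\beta^*=\lambda^*_\infty(\Sigma)/(4\pi)>0$; Shiohama's theorem for the exhaustion in (1); Theorem~\ref{SST_thms} for the ray asymptotics; and $\tfrac12<\chi(\Sigma)=\chi(N)-1\le 1$ in (2). Your variations are also sound in principle. You derive \eqref{2026_07_31_thm1.7_new2} from \eqref{2026_07_31_thm1.7_new1} by a Ces\`aro split. You get the upper bound in \eqref{2025_11_27_thm1.6_eq2} from $\inf_x\mu(A_x^*)\le\lim_{x\to\infty}\mu(A_x^*)=\lambda^*_\infty(\Sigma)=2\pi-c^*(\Sigma)$, using $\chi(\Sigma)=1$. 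Both are legitimate substitutes for the paper's direct citations of Theorem~\ref{SST_thms}(b) and Proposition~\ref{SST_cor6.2.2}. However, two of your supporting claims are false as written. Both are repairable here.

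\textbf{The threshold in your input (a) is wrong.} Shiohama's theorem (Theorem~\ref{thm:Shiohama-Busemann}) requires $c(S)>2\pi\chi(S)-\pi$, i.e.\ $\lambda_\infty(S)<\pi$, not $\lambda_\infty(S)<2\pi$. This is sharp: with one end and $\lambda_\infty(S)>\pi$, every Busemann function is nonexhaustive. For example, take a plane that is flat and conical of angle $\omega\in(\pi,2\pi)$ outside a compact set. Then $\lambda_\infty=\omega<2\pi$. But far out, in developed polar coordinates with $|\phi|\le\omega/2$, the Busemann function of a radial ray equals $r\cos\phi$, so its sublevel set $\{b\le 0\}$ is unbounded. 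The value $2\pi$ is the threshold only for the ray-mass statements of Theorem~\ref{SST_thms}. Your application still works, because $\beta^*<1/4$ gives $\lambda^*_\infty(\Sigma)<\pi$. That is exactly why the hypothesis is $1/4$ rather than $1/2$. You should restate (a) correctly.

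\textbf{Your justification of $\Ar(V_i)\to\infty$ in the Ces\`aro step is false.} A complete non-compact surface of finite total curvature need not have linear, or even infinite, area growth. The plane $dt^2+m(t)^2d\theta^2$ with $m(t)=e^{-t}$ for large $t$ has total curvature $2\pi$ (so $\lambda_\infty=0$) and finite area. The divergence you need comes instead from the hypothesis $\lambda^*_\infty(\Sigma)>0$ together with the isoperimetric identity \eqref{2025_12_02_proof1_eq3}. These give $\Ar(B^*_t(p))\sim\lambda^*_\infty(\Sigma)\,t^2/2$, hence $\Ar(V_i)\to\Ar(\Sigma,g^*)=\infty$. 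With this fix your Ces\`aro argument goes through.

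\textbf{The lower bound in \eqref{2025_11_27_thm1.6_eq2}.} Your Gauss--Bonnet sketch on an unbounded sector bounded by two rays leaves out the control at infinity, which is the real content of that estimate. Once $\Sigma\cong\R^2$ is established, simply cite Proposition~\ref{SST_cor6.2.2}, as the paper does.
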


\begin{remark}
For an extension of the properness and 
topological exhaustion properties of 
Busemann functions to higher dimensions 
from the perspective of radial curvature geometry, 
we refer the reader to the work of the second 
author (\textbf{K.}) and Tanaka \cite{KT_MA}.
\end{remark}

Taken together, the results of this article 
provide a unified perspective on how curvature 
at infinity governs the global structure of 
complete non-compact surfaces.

%%%Part6%%%

\medskip

The remainder of the article is organized as follows. 
In Section \ref{pre}, we review the basic framework 
of radial curvature geometry, establishing 
the necessary tools and comparison criteria. 
Section \ref{sec3} contains the proofs of all 
the main results. 
We establish, in succession, 
the fundamental structure 
theorem (Theorem \ref{2025_11_27_thm1.4}), 
the quantitative estimate on the number 
of ends (Corollary \ref{2026_08_02_end_estimate}) 
as a consequence 
of Theorem \ref{2025_11_27_thm1.4}, 
the critical-point elimination and 
global rigidity theorem 
(Theorem \ref{2025_11_27_thm1.6}), 
and the global exhaustion properties 
of Busemann functions 
(Theorem \ref{2025_12_01_thm1.7}).

\bigskip

This article is based 
in part on the master's thesis \cite{Har} 
of the first author (\textbf{H.}) 
at Okayama University.

%%%%Section 2%%%%%

\section{Preliminaries}\label{pre}

In this section, we collect the principal geometric tools 
that will be used throughout the proofs of the main results. 
We first review the fundamental notions 
of radial curvature geometry for pointed 
complete non-compact Riemannian manifolds, 
which provide the comparison 
framework for the critical-point and 
end-structure analysis of the Fischer-Colbrie 
metric carried out in the next section. 
We then recall several results on complete 
non-compact surfaces that relate total 
curvature and curvature at infinity to 
the geometry of rays and Busemann functions. 
Together, these two groups of results provide 
the geometric foundation for the proofs 
of the main theorems in Section \ref{sec3}.

\subsection{Radial curvature geometry}\label{sub2.1}

Let $\wt{M}$ be a complete non-compact 
Riemannian $2$-manifold homeomorphic to $\R^2$, 
let $\tilde{p}\in\wt M$ be a base point, and set 
$
\Sph^{1}_{\tilde{p}} 
:= \{ v \in T_{\tilde{p}} \wt{M} \mid \| v \| = 1 \}
$.

\begin{definition}{\rm (Model Surfaces of Revolution)}
\begin{enumerate}[{\rm (i)}]
\item We call the pair $(\wt{M}, \tilde{p})$ 
a {\em non-compact model surface of revolution} 
if its Riemannian metric $\tilde{g}$ is expressed 
in terms of geodesic polar coordinates 
around $\tilde{p}$ as 
\begin{equation}\label{polar}
\tilde{g} = dt^2 + f(t)^2d \theta^2
\end{equation}
for all 
$(t,\theta) \in (0,\infty) \times {\Sph_{\tilde{p}}^1}$, where $f:(0,\infty)\to\R$ is a positive smooth function
that extends to a smooth odd function near $0$ 
and satisfies the differential equation 
\[
f''(t) + G (\tilde{\gamma}(t)) f(t) = 0
\]
with the initial conditions $f(0) = 0$ and $f'(0) = 1$.  
\item The function 
$G \circ \tilde{\gamma} : [0,\infty) \to \R$ 
is called the {\em radial curvature function 
of $\wt{M}$}, 
where $G$ denotes the Gaussian curvature 
of $\wt{M}$, 
and $\tilde{\gamma}$ is any meridian 
emanating from $\tilde{p} = \tilde{\gamma} (0)$. 
In what follows, we set 
\[
\wt{K} := G \circ \tilde{\gamma}
\]
on $[0,\infty)$. 
\item The $m$-dimensional model manifolds of revolution are defined analogously. 
\end{enumerate}
\end{definition}

\begin{remark}\label{2026_05_24_rem1}
We note that a complete classification 
of such $n$-dimensional model manifolds 
of revolution was established by Katz and 
the second author (\textbf{K.}); 
for a detailed exposition of this 
structural classification, we refer the reader 
to \cite{KK}.
\end{remark}

Let $(\wt{M}, \tilde{p})$ be a non-compact 
model surface of revolution, and let $\wt{K}$ 
be the radial curvature function of $\wt{M}$. 

\begin{definition}{\rm (Radial Curvature Bounded Below)} 
Given a complete non-compact Riemannian 
$m$-manifold $M$ with a base point $p \in M$, 
we say that the pair $(M, p)$ 
has {\em radial curvature at $p$ bounded from 
below by $\wt{K}$} if along every 
unit-speed minimizing geodesic 
$\gamma: [0,a) \to M$ emanating 
from $p = \gamma (0)$, 
its sectional curvature $K_M$ satisfies
\[
K_M(\sigma_{t}) \ge \wt{K}(t)
\]
for all $t \in [0, a)$ 
and all $2$-dimensional linear 
subspaces 
$\sigma_t\subset T_{\gamma(t)}M$ 
spanned by $\gamma'(t)$ 
and a vector $v\perp\gamma'(t)$.
\end{definition}

\begin{remark}\label{2026_05_24_rem2}
For instance, if the metric $\tilde{g}$ of $\wt{M}$ 
is given by $dt^2 + t^{2}d \theta^2$ or 
$dt^2 + \sinh^{2} t\,d \theta^2$, 
then the radial curvature function satisfies 
$\wt{K}(t) = 0$ or $\wt{K}(t) = -1$, respectively. 
In general, the radial curvature function $\wt{K}$ 
may change sign. Indeed, 
as shown in \cite[Theorems 1.4 and 3.1]{KT5}, 
there exist non-compact model surfaces 
of revolution with finite total curvature 
whose radial curvature functions satisfy
\[
\liminf_{t\to\infty}\wt K(t)=-\infty
\quad\text{or}\quad
\limsup_{t\to\infty}\wt K(t)=\infty.
\]
Consequently, employing a model surface 
of revolution provides a significantly wider 
framework than the conventional 
comparison geometry based on space forms 
of constant curvature.
\end{remark}

To analyze the global behavior and critical points 
of distance functions under radial curvature bounds, we use a generalized Toponogov comparison 
theorem that extends the classical comparison 
with space forms to a broader class of 
model surfaces. The comparison theorem below 
will be applied in the proof 
of Corollary \ref{2026_08_02_end_estimate}, 
where the absence of a pair of cut points 
in the corresponding model sector plays 
a decisive role.

\begin{theorem}{\rm (A New Type of Toponogov Comparison Theorem \cite{KT1})}\label{new_TCT}
Let $(M, p)$ be a connected complete 
non-compact Riemannian manifold whose 
radial curvature at $p \in M$ is bounded 
from below by $\wt{K}$ of $(\wt{M}, \tilde{p})$. 
If the set 
$
\wt{V}(\delta_{0})
:=
\{ \tilde{x} \in \wt{M} \setminus \{\tilde{p}\} 
\mid 
0 < \theta(\tilde{x}) < \delta_0\}
$ 
has no pair of cut points for some 
$\delta_{0} \in (0, \pi]$, 
then for any geodesic triangle $\triangle(pxy)$ 
in $M$ with $\angle (xpy) < \delta_{0}$, 
there exists a geodesic triangle 
$
\wt{\triangle} (pxy) 
:=\triangle(\tilde{p}\tilde{x}\tilde{y})
$ in $\wt{V}(\delta_{0})$ such that
\begin{equation}\label{TCT_length}
\tilde{d}(\tilde{p},\tilde{x})=d(p,x), \quad 
\tilde{d} (\tilde{p},\tilde{y})=d(p,y), \quad 
\tilde{d} (\tilde{x},\tilde{y})=d(x,y) 
\end{equation}
and
\begin{equation}\label{TCT_angle}
\angle (xpy) \ge \angle (\tilde{x}\tilde{p}\tilde{y}), \quad \angle (pxy) \ge \angle (\tilde{p}\tilde{x}\tilde{y}), \quad \angle (pyx) \ge \angle (\tilde{p}\tilde{y}\tilde{x}), 
\end{equation}
where $d$ (resp.\ $\tilde{d}$) denotes the distance function of $M$ (resp.\ $\wt{M}$), 
and $\angle(pxy)$ denotes the angle 
between the minimizing geodesics from $x$ 
to $p$ and from $x$ to $y$ forming the triangle 
$\triangle(pxy)$.
\end{theorem}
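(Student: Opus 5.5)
The plan follows the Alexandrov-type strategy used for comparison theorems with non-constant model curvature. We reduce the statement to a local comparison for ``thin'' triangles, using only the radial curvature bound. We then globalize by subdividing the triangle and applying a gluing (straightening) lemma in the model sector $\wt V(\delta_0)$. The hypothesis that $\wt V(\delta_0)$ contains no pair of cut points is what makes both the existence of the comparison triangle and the gluing step work.

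First, I would establish the local comparison. Along any minimizing geodesic from $p$, the radial bound $K_M(\sigma_t)\ge \wt K(t)$ together with the Rauch/Hessian comparison yields $\nabla^2 d_p \le (f'/f)(d_p)\,(g-dd_p\otimes dd_p)$ in the barrier sense, since $f$ solves $f''+\wt K f=0$. Now take a minimizing geodesic $\sigma:[0,\ell]\to M$ from $x$ to $y$. I would build a comparison curve $\tilde\sigma$ in $\wt V(\delta_0)$: it starts at $\tilde x$, satisfies $\tilde d(\tilde p,\tilde\sigma(s))=d(p,\sigma(s))$, and its radial and angular speeds are chosen so that $|\tilde\sigma'|=1$. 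The Hessian inequality then gives that the angular coordinate of $\tilde\sigma$ advances at least as fast as along the model geodesic with the same radial data. From this I would conclude that $\tilde\sigma$ sweeps an angle $\ge$ that of the model geodesic joining $\tilde x$ to the point at distance $d(p,y)$. This yields $\angle(xpy)\ge\angle(\tilde x\tilde p\tilde y)$ for triangles lying in a small neighbourhood, with $\tilde d(\tilde x,\tilde y)=d(x,y)$. The base angles follow by applying the first variation formula to $d_p$ at the endpoints $x,y$ and comparing with the model.

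Next, I would globalize. For a general triangle $\triangle(pxy)$ with $\angle(xpy)<\delta_0$, subdivide the side $xy$ by points $x=z_0,\dots,z_k=y$ so that each $\triangle(pz_{i-1}z_i)$ is thin. Place the model triangles $\wt\triangle(pz_{i-1}z_i)$ consecutively around $\tilde p$ inside $\wt V(\delta_0)$. This is possible because the total angle at $\tilde p$ is at most $\angle(xpy)<\delta_0$, and the absence of cut pairs guarantees unique minimizing model geodesics in the sector. At each $\tilde z_i$, the sum of the two model angles is at most $\pi$, by the local angle comparison and the fact that $\sigma$ is a geodesic. Then apply the Alexandrov gluing lemma in $\wt V(\delta_0)$: replacing two adjacent triangles by the single triangle with the same outer side lengths only decreases the angles. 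Inducting on $k$ yields the comparison triangle with \eqref{TCT_length} and \eqref{TCT_angle}.

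The main obstacle is the gluing lemma in a model surface whose curvature $\wt K$ is neither constant nor signed. The classical proof uses the explicit cosine law, which is unavailable here. I would instead prove it by a continuity/monotonicity argument. Fix $\tilde d(\tilde p,\tilde x)$ and $\tilde d(\tilde p,\tilde y)$, and show that the model distance $\tilde d(\tilde x,\tilde y)$ is a strictly increasing function of the angle $\theta(\tilde y)-\theta(\tilde x)$ as long as the angle stays below $\delta_0$. I would derive this from the first variation formula and the fact that minimizing geodesics in $\wt V(\delta_0)$ vary continuously, which is exactly where the no-cut-pair hypothesis is used. Once this monotonicity is in hand, straightening the broken side at $\tilde z_i$ (angle sum $\le\pi$) while preserving side lengths can only open the angle at $\tilde p$ and decrease the base angles. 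That completes the argument.
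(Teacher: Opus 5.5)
Your overall architecture (a local comparison, then gluing with an Alexandrov-type lemma in $\wt V(\delta_0)$, cf.\ \cite[Lemma 4.10]{KT1}) is the natural one. The paper does not prove this theorem; it only quotes it from \cite{KT1}. However, your local step runs in the wrong direction. The curve $\tilde\sigma$ built from $r(s)=d(p,\sigma(s))$ is a unit-speed curve of length $\ell=d(x,y)$ from $\tilde x$ to a point $\tilde y'$ with $\tilde d(\tilde p,\tilde y')=d(p,y)$. Hence $\tilde d(\tilde x,\tilde y')\le \ell=\tilde d(\tilde x,\tilde y)$, and the hinge monotonicity you invoke later forces the sweep $\theta'$ of $\tilde\sigma$ to satisfy $\theta'\le\angle(\tilde x\tilde p\tilde y)$, strictly unless $\tilde\sigma$ is a model geodesic. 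The barrier inequality $r''\le (f'/f)(r)(1-r'^2)$ only says that $\tilde\sigma$ bends towards $\tilde p$, which shrinks the sweep rather than enlarging it. For example, take a hemisphere capping a half-cylinder, $p$ the pole, $x,y$ on the equator at angular distance $\varphi$, and the flat model $f(t)=t$. Then $\tilde\sigma$ is an arc of the circle of radius $\pi/2$ with sweep $2\varphi/\pi$, while $\angle(\tilde x\tilde p\tilde y)=2\arcsin(\varphi/\pi)$, which is strictly larger. Nothing links $\theta'$ to $\angle(xpy)$ either: a Rauch argument gives at best $\angle(xpy)\ge\theta'$, and only for surfaces away from the cut locus of $p$. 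In general, transplanting an $M$-curve into the model can only bound model distances from above. What is missing is a genuine comparison principle between $r$ and the radial function $\hat r$ of a suitable model geodesic from $\tilde x$. An example is the two-point inequality $r\ge\hat r$ along the model side $\tilde x\tilde y$, of the type in Lemma~\ref{2026_08_08_alexandrov_distance} but proved directly from the barrier inequality; the base angles then follow by first variation. Such a principle holds in general only on short intervals, so ``narrow'' must mean that the side opposite $p$ is short. Your lemma is instead stated for triangles in a small neighbourhood, whereas the pieces $\triangle(pz_{i-1}z_i)$ reach all the way from $p$. The no-cut-pair hypothesis is what keeps the relevant model geodesics unique and inside $\wt V(\delta_0)$.

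The globalization also fails for the angle at $p$. Since the model angles at $\tilde z_i$ sum to at most $\pi$, Alexandrov's lemma decreases the two base angles but \emph{increases} the apex angle. The straightened triangle's angle at $\tilde p$ is at least the sum of the pieces' apex angles; your last paragraph says exactly this (``can only open the angle''), contradicting ``only decreases the angles'' earlier. So gluing can never give $\angle(xpy)\ge\angle(\tilde x\tilde p\tilde y)$. Moreover, in $M$ one only has $\sum_i\angle(z_{i-1}pz_i)\ge\angle(xpy)$, with strict inequality in general (e.g.\ when $\dim M\ge 3$). Hence your reason why the pieces fit around $\tilde p$ inside $\wt V(\delta_0)$ is unfounded as well. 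The apex comparison has to be derived from the base-angle comparison by a separate argument. For instance, let $y_t$ run along the side $py$. The base angle at $y_t$ of $\triangle(pxy_t)$ gives $\frac{d^+}{dt}d(x,y_t)\le\cos\angle(\tilde p\tilde y_t\tilde x)$. By strict hinge monotonicity in $\wt V(\delta_0)$, the comparison angle at $\tilde p$ of $\triangle(pxy_t)$ is then non-increasing in $t$, and it tends to at most $\angle(xpy)$ as $t\downarrow 0$. The existence of the comparison triangle in $\wt V(\delta_0)$ must be obtained inside such a continuity argument, using that the apex angle stays $\le\angle(xpy)<\delta_0$, not from the placement of the pieces.
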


\medskip\noindent
The comparison in Theorem~\ref{new_TCT} 
is illustrated schematically in Figure~\ref{fig:toponogov}. 

\begin{figure}[htbp]
\centering
\begin{tikzpicture}[scale=1.5, >=stealth]
        
%========================================% Left Panel: M%========================================

\begin{scope}[xshift=-3.2cm]
\coordinate (P) at (0,0);
\coordinate (A) at (-1.2, 2);
\coordinate (B) at (1.2, 2);

\node[anchor=south west] at (-1.2, 2.5)
{$M \supset \triangle$};
\draw[thick] (P) to[bend left=15] node[midway, left=3pt] {$r_1$} (A);
\draw[thick] (P) to[bend right=15] node[midway, right=3pt] {$r_2$} (B);
\draw[thick] (A) to[bend left=15] node[midway, above=3pt] {$s$} (B);

% Pの内角
\draw[thick] (44:0.4) arc (44:136:0.4);
\node at (90:0.6) {$\alpha$};

% Aの内角
\draw[thick] (A) ++(-74:0.35) arc (-74:15:0.35);
\path (A) ++(-30:0.55) node {$\beta$};

% Bの内角
\draw[thick] (B) ++(165:0.35) arc (165:254:0.35);
\path (B) ++(210:0.55) node {$\gamma$};
\fill (P) circle (1.5pt) node[below] {$p$};
\end{scope}

%==========================================
% Right Panel: Model M
%==========================================
\begin{scope}[xshift=3.2cm]
\coordinate (P) at (0,0);
\coordinate (A) at (-1.2, 2);
\coordinate (B) at (1.2, 2);
\node[anchor=south west] at (-1.2, 2.5)
{$\wt{M} \supset \wt{\triangle}$};

\draw[thick] (P) -- (A) node[midway, left=3pt] {$r_1$};
\draw[thick] (P) -- (B) node[midway, right=3pt] {$r_2$};
\draw[thick] (A) -- (B) node[midway, above=3pt] {$s$};

% Pの内角
\draw[thick] (59:0.4) arc (59:121:0.4);
\node at (90:0.6) {$\wt{\alpha}$};

% Aの内角
\draw[thick] (A) ++(-59:0.35) arc (-59:0:0.35);
\path (A) ++(-30:0.55) node {$\wt{\beta}$};

% Bの内角
\draw[thick] (B) ++(180:0.35) arc (180:239:0.35);
\path (B) ++(210:0.55) node {$\wt{\gamma}$};
\fill (P) circle (1.5pt) node[below] {$\wt{p}$};
\end{scope}

% 比較の注釈
\draw[->, thick] (-1.2, 1) -- (1.2, 1) node[midway, above] {Comparison};
\node at (0, 0.4) {$\alpha \ge \wt{\alpha}, \ \beta \ge \wt{\beta}, \ \gamma \ge \wt{\gamma}$};
\end{tikzpicture}
\caption{A new type of Toponogov comparison theorem (Theorem \ref{new_TCT}): 
For geodesic triangles with identical 
corresponding side lengths $r_1, r_2, s$, 
the interior angles in $M$ are bounded from 
below by those in the model surface $\wt{M}$. 
The outward bulging of the geodesics 
in $M$ schematically illustrates the geometric 
effect of a lower radial curvature bound, 
rendering the geodesic triangle in $M$ 
a {\em fat triangle} relative to its model counterpart.}
\label{fig:toponogov}
\end{figure}
\FloatBarrier

\begin{remark}
We make two remarks concerning Theorem~\ref{new_TCT}:
\begin{enumerate}[{\rm (1)}]
\item If $(\wt{M}, \tilde{p})$ is a von Mangoldt 
surface of revolution, i.e., 
$\wt{K}$ is non-increasing on $[0, \infty)$, 
it follows from \cite[Main Theorem]{T} that 
$\wt{V}(\pi)$ has no pair of cut points. 
If $(\wt{M}, \tilde{p})$ is a Cartan--Hadamard 
surface of revolution, i.e., its Gaussian 
curvature is non-positive on $\wt{M}$, 
it follows from Hadamard's theorem 
(cf.\ \cite[Theorem 3.1 in Chap.\ 7]{dC}) 
that $\wt{V}(\pi)$ has no pair of cut points, 
since $\wt{M}$ is simply connected. 
The assumption on $\wt{V}(\delta_{0})$ 
in Theorem \ref{new_TCT} is therefore 
automatically satisfied if we employ either 
of these two surfaces of revolution 
as the base model $(\wt{M}, \tilde{p})$. 
\item We note that Theorem \ref{new_TCT} was originally stated
in \cite[Theorem 4.12]{KT1} for $\delta_0\in(0,\pi)$.
In the subsequent formulation \cite[Section 3]{KT2}, 
the endpoint case $\delta_0=\pi$ is included 
in view of part~{\rm (1)} above.
\end{enumerate}
\end{remark}

It follows from Theorem~\ref{2025_11_27_thm1.4} that, for the applications below (see Section~\ref{Proof_Cor_end_estimate}), 
we only need the following distance comparison 
for geodesic segments in a Cartan--Hadamard 
model surface of revolution. 
We include a direct proof for completeness.

\begin{lemma}{\rm (Alexandrov's Lemma)}\label{2026_08_08_alexandrov_distance}
Let $(M,p)$ be a complete Riemannian manifold whose radial curvature at $p$ is bounded from 
below by that of a Cartan--Hadamard model 
surface of revolution $(\wt M,\tilde p)$.
Let 
\[
\triangle(pxy)
\quad\text{and}\quad
\triangle(\tilde{p}\tilde{x}\tilde{y})
\]
be corresponding geodesic triangles 
furnished by Theorem~\ref{new_TCT}. 
Let 
\[
\mu:[0,\ell]\to M 
\quad\text{and}\quad
\tilde\mu:[0,\ell]\to\wt M
\]
be the minimizing geodesic segments 
joining $x$ to $y$ and 
$\tilde{x}$ to $\tilde{y}$, respectively, 
parametrized by arc length, where
$\ell:=d(x,y)=\tilde{d} (\tilde{x},\tilde{y})$. 
Then
\[
\tilde{d} (\tilde{p},\tilde{\mu}(s))
\le d(p,\mu(s))
\]
for all $s\in[0,\ell]$.
\end{lemma}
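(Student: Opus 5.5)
The plan is to reduce the statement to the Toponogov comparison, Theorem~\ref{new_TCT}, applied to sub-triangles, together with the convexity of distance functions on the Cartan--Hadamard model. Fix $s\in(0,\ell)$ (the endpoints hold with equality by Eq.~\eqref{TCT_length}) and put $z:=\mu(s)$.

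First, consider the two sub-triangles $\triangle(pxz)$ and $\triangle(pzy)$ in $M$. Their sides are the segments $\mu|_{[0,s]}$ and $\mu|_{[s,\ell]}$ together with a minimizing geodesic from $p$ to $z$. By Remark~(1) after Theorem~\ref{new_TCT}, $\wt V(\pi)$ has no pair of cut points. Hence Theorem~\ref{new_TCT} supplies comparison triangles $\triangle(\tilde p\tilde x'\tilde z')$ and $\triangle(\tilde p\tilde z''\tilde y'')$ with the same side lengths, and their angles are not larger than the corresponding angles in $M$. Since $\wt M$ is rotationally symmetric, I rotate the second triangle so that $\tilde z''=\tilde z'$ and place it on the opposite side of the meridian $\tilde p\tilde z'$.

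Second, at $z$ the two angles in $M$ satisfy $\angle(pzx)+\angle(pzy)=\pi$, because $\mu$ is a geodesic through $z$. The comparison inequalities then give $\angle(\tilde p\tilde z'\tilde x')+\angle(\tilde p\tilde z'\tilde y'')\le\pi$. The glued figure is therefore a hinge-like broken path $\tilde x'\to\tilde z'\to\tilde y''$ of total length $\ell$ that bends \emph{away} from $\tilde p$ at $\tilde z'$. The angles at $\tilde p$ add up to at most $\angle(xpy)$, counted in the covering sense if needed. This is Alexandrov's gluing step.

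Third, I compare this broken configuration with the triangle $\triangle(\tilde p\tilde x\tilde y)$. In the model, $\tilde d(\tilde p,\tilde x)=\tilde d(\tilde p,\tilde x')$ and $\tilde d(\tilde p,\tilde y)=\tilde d(\tilde p,\tilde y'')$. I straighten the angle at $\tilde z'$ to exactly $\pi$ by rotating the second piece about $\tilde z'$. Hinge monotonicity on a Cartan--Hadamard surface (the distance between endpoints of a hinge increases with the hinge angle, by the first variation formula and uniqueness of geodesics) shows that this straightening increases $\tilde d(\tilde x',\tilde y'')$ up to $\ell$. It also opens the angle at $\tilde p$, while keeping the point $\tilde z'$ at distance $d(p,z)$ from $\tilde p$.

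Once the angle at $\tilde z'$ equals $\pi$, the path is a geodesic of length $\ell$ joining two points at distances $d(p,x)$ and $d(p,y)$ from $\tilde p$. Since geodesics in the Hadamard model are unique, it is congruent to $\tilde\mu$ up to rotation. Its point at parameter $s$ lies at distance $d(p,z)$ from $\tilde p$, which gives $\tilde d(\tilde p,\tilde\mu(s))=d(p,z)$ in the straightened case.

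The main obstacle is making this third step rigorous: the straightening moves $\tilde y''$, and one must keep $\tilde d(\tilde p,\tilde y'')$ fixed while matching $\ell$. If that bookkeeping fails, I would fall back on a cleaner monotonicity argument. Put $\phi(s):=d(p,\mu(s))$ and $\tilde\phi(s):=\tilde d(\tilde p,\tilde\mu(s))$, both with the same boundary values. Theorem~\ref{new_TCT} applied to the sub-triangle $\triangle(p\,\mu(s)\,y)$ gives an angle comparison at $\mu(s)$, and hence $\phi'(s)\le$ the corresponding model derivative when the model distance is matched. One then argues by contradiction at a point where $\tilde\phi-\phi$ attains a positive maximum, using the first variation formula $\phi'(s)=-\cos\angle(p\,\mu(s)\,y)$ and its model analogue. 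I expect this maximum-principle comparison of derivatives, especially handling non-smooth points of $\phi$ at cut locus, to be the delicate step; I would handle those via one-sided derivatives using any minimizing segment.
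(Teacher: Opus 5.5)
Your Steps 1--2 match the paper's setup: comparison triangles for $\triangle(pxz)$ and $\triangle(pzy)$, and the inequality $\angle(\tilde p\tilde z_1\tilde x_1)+\angle(\tilde p\tilde z_2\tilde y_2)\le\pi$. Step 3, however, does not prove the lemma. Rotating the second piece about $\tilde z'$ moves $\tilde y''$, so after straightening the endpoint is no longer at distance $d(p,y)$ from $\tilde p$. The straightened geodesic therefore need not be congruent to $\tilde\mu$. The equality $\tilde d(\tilde p,\tilde\mu(s))=d(p,z)$ you reach is false in general, which shows the angle-sum inequality was discarded rather than used. Your remark that the angles at $\tilde p$ add up to at most $\angle(xpy)$ is also unfounded: they are bounded by $\angle(xpz)$ and $\angle(zpy)$, whose sum is $\ge\angle(xpy)$.

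The missing idea is Alexandrov's lemma \emph{in the model}, which the paper imports as \cite[Lemma~4.10]{KT1}. The angle-sum inequality forces $\angle(\tilde p\tilde x_1\tilde z_1)\ge\angle(\tilde p\tilde x\tilde y)=\angle(\tilde p\tilde x\tilde z)$. Hinge monotonicity is then applied at $\tilde x$, not at $\tilde z'$, after rotating so that $\tilde x_1=\tilde x$. There the two sides $d(p,x)$ and $s$ are both fixed, and one gets $d(p,z)=\tilde d(\tilde p,\tilde z_1)\ge\tilde d(\tilde p,\tilde z)$.

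You could prove the model lemma with your own tool, as follows. Extend $\tilde x_1\tilde z_1$ beyond $\tilde z_1$ by length $\ell-s$ to a point $\tilde w$, so that $\tilde d(\tilde x_1,\tilde w)=\ell$. Rotate so that $\tilde z_2=\tilde z_1$, and use hinge monotonicity at $\tilde z_1$ with sides $d(p,z)$ and $\ell-s$ and angles $\angle(\tilde p\tilde z_2\tilde y_2)\le\pi-\angle(\tilde p\tilde z_1\tilde x_1)$. This gives $d(p,y)\le\tilde d(\tilde p,\tilde w)$. Strict hinge monotonicity at $\tilde x_1$, with sides $d(p,x)$ and $\ell$, then turns this into the required angle inequality at $\tilde x_1$.

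Your fallback does not repair the gap. Theorem~\ref{new_TCT} gives $\angle(p\,\mu(s)\,y)\ge$ the model angle, and $-\cos$ is increasing on $[0,\pi]$. So the first variation formula yields $\phi'_+(s)\ge$ the model derivative, not $\le$; with your sign the argument points toward $\phi\le\tilde\phi$.

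Even with the correct sign, an interior maximum $s_0$ of $\tilde\phi-\phi$ only gives $\phi'_+(s_0)\ge\tilde\phi'(s_0)$. That is compatible with both relations, because the model angle is not monotone in the side $r=\tilde d(\tilde p,\cdot)$, so no contradiction arises. Bringing in the other sub-triangle at $s_0$ just reproduces the pointwise angle-sum inequality, whose exploitation again needs the model lemma above. An analytic proof would instead need an initial-value (Gronwall-type) comparison from $s=0$, with Lipschitz control of the model angle in $r$, which you have not set up.
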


\begin{proof}
The assertion is clear for $s=0$ and $s=\ell$. 
Fix $s\in(0,\ell)$, and set
\[
z:=\mu(s),
\qquad
\tilde{z}:=\tilde{\mu}(s),
\]
and
\[
r:=d(p,z),
\qquad
R:=\tilde{d}(\tilde{p},\tilde{z}).
\]
We shall prove that $R\le r$.

Since $z$ lies in $\mu((0,\ell))$, we have
\begin{equation}\label{2026_08_08_alexandrov_distance_eq2}
\angle(pzx)+\angle(pzy)=\pi.
\end{equation}
Applying Theorem~\ref{new_TCT} to the two geodesic triangles
$\triangle(pxz)$ and $\triangle(pzy)$, we obtain corresponding
geodesic triangles
\[
\triangle(\tilde{p}\tilde{x}_1\tilde{z}_1)
\quad\text{and}\quad
\triangle(\tilde{p}\tilde{z}_2\tilde{y}_2)
\]
in $\wt{M}$ satisfying
\[
\tilde{d}(\tilde{p},\tilde{x}_1)=d(p,x),
\quad
\tilde{d}(\tilde{p},\tilde{z}_1)=r,
\quad
\tilde{d}(\tilde{x}_1,\tilde{z}_1)=s,
\]
and
\[
\tilde{d}(\tilde{p},\tilde{z}_2)=r,
\quad
\tilde{d}(\tilde{p},\tilde{y}_2)=d(p,y),
\quad
\tilde{d}(\tilde{z}_2,\tilde{y}_2)=\ell-s.
\]
In particular,
\begin{equation}\label{2026_08_09_neweq1}
\tilde{d}(\tilde{p},\tilde{z}_1) 
= \tilde{d}(\tilde{p},\tilde{z}_2) 
= r
\end{equation}
and
\begin{equation}\label{2026_08_09_neweq2}
\tilde{d}(\tilde{x},\tilde{y})
=
\ell
=
\tilde{d}(\tilde{x}_1,\tilde{z}_1)
+
\tilde{d}(\tilde{z}_2,\tilde{y}_2).
\end{equation}

Moreover, the angle comparison in
Theorem~\ref{new_TCT} implies 
\[
\angle(pzx)
\ge
\angle(\tilde{p}\tilde{z}_1\tilde{x}_1)
\]
and
\[
\angle(pzy)
\ge
\angle(\tilde{p}\tilde{z}_2\tilde{y}_2).
\]
Hence, by Eq.\,\eqref{2026_08_08_alexandrov_distance_eq2},
\begin{equation}\label{2026_08_08_alexandrov_distance_eq3}
\angle(\tilde{p}\tilde{z}_1\tilde{x}_1)
+
\angle(\tilde{p}\tilde{z}_2\tilde{y}_2)
\le \pi.
\end{equation}

Since $\triangle(pxy)$ and 
$\triangle(\tilde{p}\tilde{x}\tilde{y})$ 
are corresponding geodesic triangles,
\[
\tilde{d}(\tilde{p},\tilde{x})
=
\tilde{d}(\tilde{p},\tilde{x}_1)
\]
and
\[
\tilde{d}(\tilde{p},\tilde{y})
=
\tilde{d}(\tilde{p},\tilde{y}_2).
\]
Therefore, 
Eqs.\,\eqref{2026_08_09_neweq1}, \eqref{2026_08_09_neweq2}, and \eqref{2026_08_08_alexandrov_distance_eq3} verify the hypotheses 
of \cite[Lemma 4.10]{KT1}. 
Thus,
\begin{equation}\label{2026_08_09_neweq4}
\angle(\tilde{p}\tilde{x}_1\tilde{z}_1)
\ge
\angle(\tilde{p}\tilde{x}\tilde{y})
\end{equation}
and
\[
\angle(\tilde{p}\tilde{y}_2\tilde{z}_2)
\ge
\angle(\tilde{p}\tilde{y}\tilde{x}).
\]
Since $\tilde{z}$ lies on $\tilde{\mu}((0,\ell))$, 
we have
\[
\angle(\tilde{p}\tilde{x}\tilde{y})
=
\angle(\tilde{p}\tilde{x}\tilde{z}),
\]
and hence
\begin{equation}\label{2026_08_09_neweq5}
\angle(\tilde{p}\tilde{x}_1\tilde{z}_1)
\ge
\angle(\tilde{p}\tilde{x}\tilde{z}).
\end{equation}

%%%%%%

We now use the Cartan--Hadamard assumption. 
After applying an isometry of the model fixing 
$\tilde{p}$, 
we may assume that $\tilde{x}_1=\tilde{x}$. 
The two geodesic segments $\tilde{x}\tilde{z}_1$ 
and $\tilde{x}\tilde{z}$ have the same length $s$, while Eq.\,\eqref{2026_08_09_neweq5} says that 
the former makes an angle with the segment 
$\tilde{x}\tilde{p}$ no smaller than the latter.

Since $\wt{M}$ is a Cartan--Hadamard surface,
$\exp_{\tilde{x}}:T_{\tilde{x}}\wt{M}\to\wt{M}$ 
is a diffeomorphism. 
For $\omega\in[0,\pi]$, 
let $v_\omega$ be the unit vector making angle 
$\omega$ with the initial direction 
of the segment $\tilde{x}\tilde{p}$, 
and set
\[
\tilde{q}_\omega:=\exp_{\tilde{x}}(s v_\omega),
\qquad
F(\omega):=\tilde{d}(\tilde{p},\tilde{q}_\omega).
\]
Thus the curve
\[
\omega\longmapsto\tilde{q}_\omega
\]
lies on the geodesic circle 
$\{\tilde{a}\in\wt{M}\mid\tilde{d}(\tilde{x},\tilde{a})=s\}$ 
and serves as the terminal transversal curve 
for the variation through the minimizing geodesics from $\tilde{p}$ to $\tilde{q}_\omega$.

Let $\zeta_\omega$ denote the angle at 
$\tilde{q}_\omega$
of the geodesic triangle
$\triangle(\tilde{p}\tilde{x}\tilde{q}_\omega)$. 
By the Gauss lemma, the tangent vector
\[
\frac{\partial\tilde{q}_\omega}{\partial\omega}
\]
to this transversal curve is orthogonal 
to the radial geodesic from 
$\tilde{x}$ to $\tilde{q}_\omega$. 
Hence, by the first variation formula 
for the distance function,
\[
F'(\omega)
=
\left\|
\frac{\partial\tilde{q}_\omega}{\partial\omega}
\right\|
\sin\zeta_\omega
\ge 0
\]
for all $\omega\in(0,\pi)$. 
Hence the opposite-side length of the hinge is nondecreasing with respect to its angle. 
Therefore, Eq.\,\eqref{2026_08_09_neweq5} 
shows that
\[
\tilde{d}(\tilde{p},\tilde{z}_1)
\ge
\tilde{d}(\tilde{p},\tilde{z}).
\]
By the definitions of $r$ and $R$, this is
\[
r\ge R.
\]
Hence
\[
\tilde{d}(\tilde{p},\tilde{\mu}(s))
\le
d(p,\mu(s)).
\]
Since $s\in(0,\ell)$ was arbitrary, 
the proof is complete.
$\qedd$
\end{proof}

Sufficiently large volume growth 
relative to the comparison model 
yields a global rigidity conclusion. 
The volume-growth rigidity theorem stated 
below will provide a key ingredient in the proof 
of our main rigidity theorem 
(Theorem \ref{2025_11_27_thm1.6}).

\begin{theorem}{\rm (\cite[Corollary 3.5]{KT2})}\label{2025_12_13_thm2.7}
Fix $m \in \N$ with $m \ge 2$. 
Define a function $F$ on $[0,\pi]$ by 
\begin{equation}\label{2025_12_13_thm2.7_fn}
F(r) 
:= 
\left(
\int_{0}^{\pi} \sin^{m -2} t\,dt 
\right)^{-1} \int_{0}^{r} \sin^{m -2} t \,dt
\end{equation}
for all $r \in [0,\pi]$. 
Let $(M,p)$ be a connected complete 
non-compact Riemannian $m$-manifold 
whose radial curvature at $p \in M$ is bounded 
from below by $\wt{K}$ of $(\wt{M}, \tilde{p})$, 
and let 
\begin{equation}\label{2025_12_13_thm2.7_const}
\delta (\wt{K}_{-}) 
:= \frac{\pi}{2} 
\exp \left(
\int^{\infty}_{0} t \,\wt{K}_{-} (t) \,dt 
\right),
\end{equation}
where we set $\wt{K}_{-} := \min \{0, \wt{K}\}$. If 
\[
\lim_{t \to \infty} \frac{\Vol B_t (p)}{\Vol B_t^m (\tilde{p})} \ge 1 - F(\delta (\wt{K}_{-}))
\]
then $M$ is diffeomorphic to Euclidean 
$m$-space $\R^m$. 
Here $\Vol B_t (p)$ denotes 
the volume of the open distance ball $B_t(p)$ centered at $p$ with radius $t>0$, 
and $B_t^m (\tilde{p})$ denotes the open 
distance ball centered at 
$\tilde{p}$ with radius $t > 0$ 
in the $m$-dimensional model manifold 
$(\wt{M}^m,\tilde{p})$ of revolution 
determined by the same warping function 
as $(\wt{M},\tilde{p})$.
\end{theorem}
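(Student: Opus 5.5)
The plan is to argue by contradiction via the critical point theory of Grove and Shiohama \cite{GS}. If $d_p:=d(p,\,\cdot\,)$ has no critical points on $M\setminus\{p\}$, then $M$ is diffeomorphic to $\R^m$. So I will assume there is a critical point $q\neq p$ of $d_p$ and aim to show
\[
\lim_{t\to\infty}\frac{\Vol B_t(p)}{\Vol B^m_t(\tilde p)}<1-F(\delta(\wt K_-)).
\]
First I replace $\wt K$ by $\wt K_-$. Since $\wt K_-\le\wt K$, the radial curvature of $(M,p)$ is still bounded below by $\wt K_-$. The corresponding model (warping function solving $f''+\wt K_-f=0$) has non-positive curvature, hence is a Cartan--Hadamard surface of revolution. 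By part (1) of the remark following Theorem~\ref{new_TCT}, $\wt V(\pi)$ has no pair of cut points, so Theorem~\ref{new_TCT} applies with $\delta_0=\pi$. Because $\wt K_-\le\wt K$ gives $f_-\ge f$, the model volume growth only increases, so the limiting ratio computed with the $\wt K_-$ model is at most the ratio computed with $\wt K$. It therefore suffices to prove the strict inequality for the $\wt K_-$ model; the quantity $\delta(\wt K_-)$ is the same in both cases.

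\textbf{Step 1 (angle estimate).} Let $\gamma$ be any ray from $p$ and let $\sigma$ be any minimizing segment from $p$ to $q$. I want to show
\[
\angle\bigl(\gamma'(0),\sigma'(0)\bigr)\ge\delta(\wt K_-).
\]
Set $x_t:=\gamma(t)$. Since $q$ is critical, there is a minimizing segment from $q$ to $p$ making angle $\le\pi/2$ with a segment from $q$ to $x_t$. Applying Theorem~\ref{new_TCT} to $\triangle(pqx_t)$ gives a model triangle with $\angle\tilde q\le\pi/2$ and with angle at $\tilde p$ bounded above by the angle in $M$. Letting $t\to\infty$, the model picture becomes a ray $\tilde\gamma$ from $\tilde p$ together with a geodesic from $\tilde q$ meeting the segment $\tilde p\tilde q$ at angle $\le\pi/2$ and running off to infinity. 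I then bound from below the angle subtended at $\tilde p$ by such a configuration. I would use Clairaut's relation $f(t)\sin\psi=\text{const}$ along model geodesics, together with the estimate
\[
f'(t)\ge\exp\Big(\int_0^\infty s\,\wt K_-(s)\,ds\Big),
\]
which follows from $f''=-\wt K_-f\ge0$ and a Gronwall-type argument with $f(s)\le s f'(s)$. Integrating $d\theta$ along the geodesic from $\tilde q$ should give a total angular sweep of at least $\frac{\pi}{2}\exp\bigl(\int_0^\infty t\,\wt K_-(t)\,dt\bigr)=\delta(\wt K_-)$. This is where the exponential form of $\delta$ enters, and I expect it to be the main obstacle: a naive Gauss--Bonnet argument only gives an additive bound, and care is needed to get the multiplicative constant exactly.

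\textbf{Step 2 (rays avoid a cap).} By Step 1, the set $A_p$ of unit vectors tangent to rays from $p$ misses the open geodesic ball of radius $\delta:=\delta(\wt K_-)$ in $\Sph^{m-1}_p$ centred at $\sigma'(0)$. The normalized measure of that cap is $F(\delta)$; this is exactly the reason for the $\sin^{m-2}$ weight in Eq.\,\eqref{2025_12_13_thm2.7_fn}. Hence the normalized measure of $A_p$ is at most $1-F(\delta)$.

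\textbf{Step 3 (volume comparison).} I write
\[
\Vol B_t(p)=\int_{\Sph^{m-1}_p}\int_0^{\min\{t,\,c(v)\}}J(s,v)\,ds\,dv,
\]
where $c(v)$ is the cut distance in direction $v$. Along minimizing geodesics, the radial curvature bound gives the Jacobian comparison $J(s,v)\le f(s)^{m-1}$. For directions with $c(v)<\infty$ the contribution stays bounded, while $\Vol B^m_t(\tilde p)\to\infty$ because $f(t)\ge t$. Dominated convergence then shows that the limiting ratio is at most the normalized measure of $\{v\mid c(v)=\infty\}=A_p$, hence at most $1-F(\delta)$ by Step 2.

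To get strict inequality, I note that the set $\{v\mid c(v)=\infty\}$ is closed. Applying Step 1 to a minimizing segment to a nearby non-critical point, or using that the inequality in Step 1 is strict for the actual direction of $\sigma$, should show that $A_p$ misses a slightly larger cap. That yields a strict inequality, contradicting the hypothesis, so no critical point $q\neq p$ exists and $M$ is diffeomorphic to $\R^m$.
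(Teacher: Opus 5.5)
\textbf{Main gap: the reduction to $\wt K_-$ runs the wrong way.} The hypothesis concerns $R:=\lim_{t\to\infty}\Vol B_t(p)/\Vol B^m_t(\tilde p)$, taken with the original warping function $f$. Since $f_-\ge f$, the ratio $R_-$ taken against the $\wt K_-$-model satisfies $R_-\le R$. So proving $R_-<1-F(\delta(\wt K_-))$ does not contradict $R\ge 1-F(\delta(\wt K_-))$.

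Using $\wt K_-$ for the angle estimate is harmless: the radial curvature is also bounded below by $\wt K_-$, and that model is Cartan--Hadamard. But Step~3 must be run with $f$ itself. The bound $J(s,v)\le f(s)^{m-1}$ still holds. However, your dominated-convergence step needs $\int_0^\infty f^{m-1}\,ds=\infty$, and your justification, $f(t)\ge t$, is valid only for $f_-$.

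If the original model has finite volume (so $\liminf_{t\to\infty}f(t)=0$), the integrand tends to $\int_0^{c(v)}f^{m-1}ds\big/\int_0^\infty f^{m-1}ds$ rather than to the indicator of $A_p$. The comparison of $R$ with the mass of rays is then lost. This is exactly the second case singled out in Remark~\ref{2026_08_05_rem_critical_points}. There the argument the paper relies on does not use critical points or ray mass at all; it uses a separate result showing that the cut locus of $p$ is empty. Your proposal has nothing covering this case. Only when $\Vol B^m_t(\tilde p)\to\infty$ is a critical-point argument of your type what the cited proof does.

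\textbf{Further problems.} Two further steps fail as written, and there is one smaller slip.

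First, in Step~1 you fix an arbitrary segment $\sigma$ from $p$ to $q$. But criticality of $q$ only supplies, for the direction at $q$ of a segment to $\gamma(t)$, \emph{some} segment $\sigma_t$ with angle $\le\pi/2$ there. The comparison therefore bounds $\angle(\gamma'(0),\sigma_t'(0))$ only for that segment, and in the limit only for one $\sigma$ depending on $\gamma$. A critical point $q\ne p$ always has several segments from $p$. So Step~2 only shows that $A_p$ misses $\bigcap_\sigma B_\delta(\sigma'(0))$, which can be far smaller than a single $\delta$-cap. An additional argument is needed to get a full cap.

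Second, the strictness argument does not work. Closedness of $A_p$ is compatible with $A_p=\Sph^{m-1}_p\setminus B_\delta(w)$, whose normalized measure is exactly $1-F(\delta)$. Also, Step~1 cannot be applied at non-critical points. So the borderline case $R=1-F(\delta(\wt K_-))$, which the statement explicitly allows, is not covered. For instance, if $\int_0^\infty t\,\wt K_-(t)\,dt=-\infty$, then $\delta=0$ and the hypothesis reads $R\ge1$; your argument yields no contradiction at all there, and that case needs the equality case of the volume comparison.

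Finally, the estimate you need in Step~1 is $f_-'(t)\le\exp\bigl(-\int_0^\infty s\,\wt K_-(s)\,ds\bigr)$. This is what $f_-(s)\le s f_-'(s)$ and Gronwall give. The inequality you wrote is trivially true and does not produce the factor $\exp\bigl(\int_0^\infty t\,\wt K_-(t)\,dt\bigr)$.
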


\begin{remark}\label{2026_08_05_rem_critical_points}
Although Theorem~\ref{2025_12_13_thm2.7} 
is stated in \cite[Corollary~3.5]{KT2} 
with only the diffeomorphism conclusion, 
its proof yields the stronger conclusion 
that the distance function 
$d_p(\,\cdot\,):=d(p, \,\cdot\,)$ 
has no critical points on $M\setminus\{p\}$ 
in the sense of Grove--Shiohama. 
Indeed, when
\[
\lim_{t\to\infty}\Vol B_t^m(\tilde{p})=\infty,
\]
the proof of \cite[Theorem~3.4]{KT2} 
shows directly that no point of $M\setminus\{p\}$ 
is a critical point of $d_p$. 
On the other hand, when
\[
\lim_{t\to\infty}\Vol B_t^m(\tilde{p})<\infty,
\]
the proof of \cite[Corollary~3.5]{KT2} 
invokes \cite[Theorem~1.2]{ST_MZ}. 
In this case the warping function 
of the comparison model satisfies
\[
\liminf_{t\to\infty} f(t)=0,
\]
and the proof of \cite[Theorem~1.2]{ST_MZ} 
shows that the cut locus of $p$ is empty. 
Hence $d_p$ has no critical points 
on $M\setminus\{p\}$ in this case as well. 
Therefore, under the assumptions 
of Theorem~\ref{2025_12_13_thm2.7},
\[
\operatorname{Crit}(d_p)\cap(M\setminus\{p\})=\emptyset,
\]
where $\operatorname{Crit}(d_p)$ 
denotes the set of all critical points of $d_p$. 
\end{remark}

\subsection{The geometry of total curvature 
on complete non-compact surfaces}\label{sub2.2}

As discussed in the introduction, 
for the surface $(\Sigma,g^*)$ considered 
in this article, 
the curvature at infinity plays a central role 
in controlling both the topology of $\Sigma$ 
and the behavior of its Busemann functions. 
The notion of curvature at infinity was introduced 
by Shioya \cite{Shioya91} in his study of 
complete non-compact surfaces; 
see also \cite{SST} for a systematic treatment. 
To prepare for these arguments, we recall several results on complete non-compact surfaces that 
relate total curvature and curvature at infinity 
to the geometry of rays and Busemann functions. Throughout this subsection, 
let $(S,h)$ be a connected complete non-compact finitely connected Riemannian $2$-manifold 
admitting total curvature, and denote 
its total curvature by $c(S)$. 

\bigskip

We first recall Shiohama's criterion, 
which converts a lower bound for the total 
curvature into the exhaustion property 
of Busemann functions. 
This result will be applied directly in the proof 
of Theorem \ref{2025_12_01_thm1.7}.

\begin{theorem}{\rm (\cite[Main Theorem]{Shio})}
\label{thm:Shiohama-Busemann}
If $S$ has exactly one end, 
and if $S$ satisfies 
\[
c(S)>2\pi\chi(S)-\pi,
\]
then every Busemann function on $S$ 
is an exhaustion, 
where $\chi(S)$ denotes the Euler characteristic 
of $S$. 
\end{theorem}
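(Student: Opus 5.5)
The plan is to pass to the asymptotic cone of $S$ and show that every Busemann function grows at least linearly in the distance from a base point. Set $\lambda_\infty(S):=2\pi\chi(S)-c(S)$. The hypothesis $c(S)>2\pi\chi(S)-\pi$ is exactly
\[
0\le\lambda_\infty(S)<\pi.
\]
Since $S$ has one end, I will invoke Shioya's structure theory at infinity \cite{Shioya91} (see also \cite{SST}). It says that for a fixed $o\in S$, the rescaled pointed spaces $(S,t^{-1}d_h,o)$ converge, in the pointed Gromov--Hausdorff sense as $t\to\infty$, to the Euclidean cone $C_{\lambda}$ over a circle of length $\lambda:=\lambda_\infty(S)$. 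When $\lambda=0$ the cone degenerates to a half-line. I will also use the accompanying fact that the ideal boundary $S(\infty)$, with its Tits-type angle metric $\angle_\infty$, is that circle. Consequently any two ideal points satisfy $\angle_\infty(\xi,\eta)\le\lambda/2<\pi/2$.

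Next, fix a ray $\gamma$ from $o$ and write $b_\gamma(x)=\lim_{t\to\infty}\{t-d_h(x,\gamma(t))\}$. Suppose, for contradiction, that $b_\gamma$ is not an exhaustion. Then there are points $x_n$ with $r_n:=d_h(o,x_n)\to\infty$ and $b_\gamma(x_n)\le c_0$ for some constant $c_0$. Rescaling by $r_n$ and passing to a subsequence, the $x_n$ converge to a point $\bar x$ at unit distance from the vertex of $C_\lambda$, lying on the generator determined by some ideal point $\xi$. The ray $\gamma$ converges to the generator of $\gamma(\infty)$. The key step is the limit identity
\[
\lim_{n\to\infty}\frac{b_\gamma(x_n)}{r_n}=\cos\angle_\infty(\gamma(\infty),\xi),
\]
which is the Busemann function of the cone generator evaluated at $\bar x$. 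Granting it, the bound $\angle_\infty\le\lambda/2<\pi/2$ makes the right-hand side at least $\cos(\lambda/2)>0$. This contradicts $b_\gamma(x_n)\le c_0$ with $r_n\to\infty$. The same argument in fact yields the quantitative bound $b_\gamma(x)\ge(\cos(\lambda/2)-\ve)\,d_h(o,x)$ outside a compact set, so every sublevel set of $b_\gamma$ is bounded and hence compact.

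The main obstacle is justifying the limit identity, since Busemann functions are defined by a limit that need not commute with Gromov--Hausdorff rescaling. I would try the following first. By the triangle inequality, for any fixed $T\gg r_n$,
\[
b_\gamma(x_n)\ge T-d_h(x_n,\gamma(T)).
\]
Choose $T=sr_n$ with $s$ large. The rescaled distance $d_h(x_n,\gamma(sr_n))/r_n$ converges to the cone distance between $\bar x$ and the point at distance $s$ along the generator of $\gamma(\infty)$. That cone distance is $\sqrt{1+s^2-2s\cos\theta}$ with $\theta=\angle_\infty(\gamma(\infty),\xi)$, which is valid because $\theta<\pi$ means the segment avoids the vertex. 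Letting $n\to\infty$ and then $s\to\infty$ gives the lower bound $\liminf_n b_\gamma(x_n)/r_n\ge\cos\theta$. This is the only direction needed. The remaining care is to check that Shioya's convergence holds with the angle metric identified as stated when $S$ has one end and $\lambda<\pi$, including the degenerate case $\lambda=0$, where $\theta=0$ and the bound reads $b_\gamma\ge(1-\ve)d_h(o,\cdot)$.
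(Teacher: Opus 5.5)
The paper gives no proof of this statement. It is quoted from \cite{Shio} and used as a black box in the proof of Theorem~\ref{2025_12_01_thm1.7}, so there is no internal argument to match.

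\textbf{Correctness.} Your proof is correct, granted Shioya's theorem that the asymptotic cone of a one-ended surface of finite total curvature is the Euclidean cone $C_\lambda$ over a circle of length $\lambda=\lambda_\infty(S)$. Finiteness of $c(S)$ is automatic here: Cohn-Vossen gives $c(S)\le 2\pi\chi(S)$, and the hypothesis bounds $c(S)$ from below.

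The step you flag as the main obstacle is handled correctly. Since $t\mapsto t-d_h(x,\gamma(t))$ is non-decreasing, you have $b_\gamma(x_n)\ge sr_n-d_h(x_n,\gamma(sr_n))$. Rescaling, letting $n\to\infty$ and then $s\to\infty$, gives $\liminf_n b_\gamma(x_n)/r_n\ge\cos\theta>0$, which contradicts $b_\gamma(x_n)\le c_0$.

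The ``remaining care'' you mention is actually unnecessary. You never need to identify the limit of the rescaled ray with the generator of $\gamma(\infty)$. Any subsequential limit of $s\mapsto\gamma(sr_n)$ is a ray from the vertex of $C_\lambda$, hence a generator. Any two generators make angle at most $\lambda/2<\pi/2$; for $\lambda=0$ the cone is a half-line and the bound is $1$. To cover Busemann functions of arbitrary rays, take $o:=\gamma(0)$; the asymptotic cone does not depend on the base point.

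\textbf{Comparison with the classical route.} The proof in \cite{Shio} predates Gromov--Hausdorff and ideal-boundary techniques. It argues on $S$ directly, using Gauss--Bonnet on domains bounded by rays in the spirit of Cohn-Vossen, so it stays within classical surface theory.

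Your argument instead uses the asymptotic-cone/ideal-boundary viewpoint of \cite{Shioya91}, and this buys two things:
\begin{enumerate}
\item It makes the threshold $\pi$ transparent. A sequence along which $b_\gamma$ stays bounded produces an ideal point at Tits distance $\ge\pi/2$ from $\gamma(\infty)$ along both arcs of a circle of length $\lambda_\infty(S)$, which forces $\lambda_\infty(S)\ge\pi$.
\item It gives the stronger linear growth $b_\gamma\ge\bigl(\cos(\lambda_\infty(S)/2)-\varepsilon\bigr)\,d_h(\gamma(0),\cdot)$ off a compact set.
\end{enumerate}
The cost is that all the work is carried by Shioya's structure theorem, which is much deeper than the statement itself. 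You should check that the version you cite is proved without appealing to Shiohama's exhaustion theorem, so the argument is not circular.
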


\noindent
In the present notation, the curvature at infinity 
of $(S, h)$ is defined by
\[
\lambda_\infty(S):=\lambda_\infty(S, h):=2\pi\chi(S)-c(S).
\]
Shiohama \cite{Shio} also proved that 
if $S$ has exactly one end and
\[
c(S)<2\pi\chi(S)-\pi,
\]
then every Busemann function on $S$ 
is nonexhaustive. 
Equivalently, in terms of the curvature at infinity, 
the value $\pi$ is the sharp threshold between 
the two strict cases. 
In the borderline case
\[
c(S)=2\pi\chi(S)-\pi,
\qquad\text{equivalently}\qquad
\lambda_\infty(S)=\pi,
\]
there exists an example for which some 
Busemann functions are exhaustive 
whereas others are nonexhaustive.

\bigskip

We next recall a relation between the mass 
of rays and the curvature at infinity. 
For each $x\in S$, let $A_x$ denote the set 
of all unit vectors tangent to rays emanating 
from $x$, and let $\mu$ denote 
the Lebesgue measure on the unit circle
\[
\mathbb S_x^1:=\{u\in T_xS\mid \|u\|=1\},
\qquad
\|u\|:=\sqrt{h_x(u,u)}.
\]
The following result, 
established in \cite[Corollary 6.2.2]{SST}, 
will be applied directly to obtain 
Eq.\,\eqref{2025_11_27_thm1.6_eq2}.

\begin{proposition}{\rm (\cite[Corollary 6.2.2]{SST})}
\label{SST_cor6.2.2}
If $S$ is homeomorphic to 
$\R^2$ (such a $S$ is called a Riemannian plane), then 
\[
2\pi-\int_S K_+\,d{\rm vol}
\le
\inf_{x\in S}\mu(A_x)
\le 2\pi - c(S) = \lambda_\infty(S),
\]
where $K_+:=\max\{0,K\}$.
\end{proposition}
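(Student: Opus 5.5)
This statement is quoted from \cite[Corollary 6.2.2]{SST} and is not proved in the paper; I would reconstruct it as follows. The plan is to treat the two inequalities separately. Both rest on a Gauss--Bonnet analysis of the \emph{non-ray sectors} at a point $x\in S$. Since the set of limits of rays is again a ray, $A_x$ is closed in $\mathbb S^1_x$. Hence $\mathbb S^1_x\setminus A_x$ is a countable union of disjoint open arcs $(a_j,b_j)$. The endpoints of each arc are tangent to rays $\sigma_{a_j},\sigma_{b_j}$, and no direction strictly between them is a ray direction. Since $S$ is a plane, the two rays $\sigma_{a_j},\sigma_{b_j}$ bound a closed sector domain $D_j$. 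The $D_j$ are pairwise disjoint away from $x$, because distinct rays from $x$ do not cross.

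\emph{Lower bound.} For one sector $D=D_j$, I would prove $b-a\le\int_D K_+\,d{\rm vol}$.
\begin{enumerate}[(i)]
\item For large $t$, join $\sigma_a(t)$ to $\sigma_b(t)$ by a minimizing segment $\tau_t$. Because no ray from $x$ enters the open sector, $\tau_t$ stays in $D$ and escapes to infinity as $t\to\infty$. Otherwise a limit of minimizing segments from $x$ to points of $\tau_t$ would produce a ray from $x$ in the interior of the sector.
\item Gauss--Bonnet on the geodesic triangle $T_t\subset D$ bounded by $\sigma_a|_{[0,t]}$, $\sigma_b|_{[0,t]}$ and $\tau_t$ gives $(b-a)+\theta_1(t)+\theta_2(t)=\pi+\int_{T_t}K\,d{\rm vol}$, where $\theta_1,\theta_2$ are the interior angles at the far vertices.
\item The key claim is $\liminf_{t\to\infty}(\theta_1+\theta_2)\ge\pi$. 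I would deduce it from the ray property of $\sigma_a$ and $\sigma_b$, namely $d(x,\sigma(s))=s$. It is expected via a first-variation argument, by comparing $\tau_t$ with the rays as $t\to\infty$ and using that $\tau_t$ leaves every compact set.
\end{enumerate}
Granting (iii), $b-a\le\int_D K_+$. Summing over $j$ and using disjointness gives $2\pi-\mu(A_x)\le\int_S K_+$ for every $x$, which is the left inequality.

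\emph{Upper bound.} Here the task is to find, for each $\ve>0$, a point $x$ with $\mu(A_x)\le 2\pi-c(S)+\ve$. Since $S$ admits total curvature, I would choose a compact disk $W$ with $\int_{S\setminus W}|K|\,d{\rm vol}<\ve/2$. I would then take $x$ far from $W$, so that every ray from $x$ stays outside $W$. The reason is that rays from points tending to infinity cannot pass near a fixed compact set while remaining minimizing in both directions relative to the far point. Consequently $W$ lies inside a single non-ray sector $D_{j_0}$. The same triangle identity as in (ii) applies, but now the far angles are controlled from above, $\limsup(\theta_1+\theta_2)\le\pi$; this uses that $\tau_t$ must wrap around $W$ to stay minimizing. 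This yields $b_{j_0}-a_{j_0}\ge\int_{D_{j_0}}K-\ve/2\ge c(S)-\ve$. Hence $2\pi-\mu(A_x)\ge c(S)-\ve$, and letting $\ve\to0$ gives the right inequality; the identification $2\pi-c(S)=\lambda_\infty(S)$ is just $\chi(S)=1$.

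\emph{Main obstacle.} The step I expect to be hardest is the asymptotic control of the far angles $\theta_1(t)+\theta_2(t)$ in the triangles $T_t$. It needs the lower estimate $\ge\pi$ for the first inequality and the upper estimate $\le\pi$ for the second. I would first try to use the Busemann functions of $\sigma_a$ and $\sigma_b$ together with the first variation formula. If that is too weak, the fallback is to replace the segments $\tau_t$ by boundary curves of an exhaustion whose total geodesic curvature converges, in the spirit of Shioya's treatment of curvature at infinity.
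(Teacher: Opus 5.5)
\textbf{There is a genuine gap: the decisive step (iii) is not proved, and the mechanism you propose for it cannot work.} Splitting the circle of directions into non-ray sectors and applying Gauss--Bonnet is a reasonable frame. But every ingredient you list for (iii) also holds in the Euclidean plane: $d(x,\sigma(s))=s$ for both boundary rays, $\tau_t\subset D$, and $\tau_t$ leaving every compact set. Take two rays at angle $\omega\in(0,\pi)$ there. Then $\tau_t$ is the base of an isosceles triangle, and $\theta_1+\theta_2=\pi-\omega<\pi$ for every $t$.

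In fact, by your identity (ii), the claim $\liminf(\theta_1+\theta_2)\ge\pi$ is equivalent to $b-a\le\liminf_{t\to\infty}\int_{T_t}K\,d{\rm vol}$. So (iii) is just a restatement of the per-sector inequality you want. The hypothesis that no direction in $(a,b)$ is a ray must therefore be used essentially. In your sketch it enters only through (i), and the justification given there does not actually show $\tau_t\subset D$. Also, if $A_x$ is a single direction, the triangle $T_t$ degenerates.

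\textbf{A route for the lower bound that does use the hypothesis.}
\begin{enumerate}[(1)]
\item Fix $[a',b']\subset(a,b)$. The cut distance from $x$ is continuous and finite on $[a',b']$, so it is bounded there by some $T$.
\item For $t>T$, join $\sigma_a(t)$ to $\sigma_b(t)$ by a curve in $D\setminus\overline{B_T(x)}$. This is possible because $D$ is topologically a closed half-plane.
\item Minimal geodesics from $x$ to points of $D$ stay in $D$, since they cannot meet $\sigma_a$ or $\sigma_b$ away from $x$. Along the curve, the set of their initial directions starts at $\{a\}$ and ends at $\{b\}$. It never meets $[a',b']$, and it is upper semicontinuous.
\item Hence some point $y$ on the curve is joined to $x$ by two minimal geodesics with initial directions $v_-\le a'<b'\le v_+$. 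These bound a digon $\Delta\subset D$.
\item Gauss--Bonnet on $\Delta$ gives $b'-a'\le v_+-v_-=\int_\Delta K\,d{\rm vol}-\angle_y\le\int_D K_+\,d{\rm vol}$.
\item Let $a'\downarrow a$ and $b'\uparrow b$, then sum over the sectors, whose interiors are disjoint. This gives the left inequality.
\end{enumerate}

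\textbf{The upper bound has the same kind of gap.}
\begin{itemize}
\item Your claim that every ray from a far point misses $W$ is false in general. In the flat plane every direction from every point is a ray. You must first split off the case $c(S)\le 0$. There the right inequality is trivial, because $\mu(A_x)\le 2\pi\le\lambda_\infty(S)$. For $c(S)>0$ the claim is a genuine lemma, and you do not prove it.
\item By Gauss--Bonnet, the bound $\limsup(\theta_1+\theta_2)\le\pi$ amounts to $\limsup_t\int_{T_t}K\,d{\rm vol}\le b-a$. This is a Cohn-Vossen-type inequality $c(D)\le\angle_x D$ for a sector bounded by two rays. It needs the first-variation analysis of the length of $\partial B_t(x)\cap D$, as in the proof of Cohn-Vossen's theorem. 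The phrase ``$\tau_t$ must wrap around $W$'' is not an argument for it.
\end{itemize}

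\textbf{A shortcut for the right inequality.} It follows at once from the result the paper quotes as Theorem~\ref{SST_thms}. A plane has exactly one end. If $c(S)>0$, then $\lambda_\infty(S)<2\pi$, so by part (b) the averages of $\mu(A_x)$ over a compact exhaustion tend to $\lambda_\infty(S)$. This forces $\inf_x\mu(A_x)\le\lambda_\infty(S)$.
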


The following result gives a more detailed 
description of the asymptotic distribution 
of rays when $\lambda_\infty(S)$ is 
less than $2\pi$. 
In particular, it shows that the mass of 
rays approaches $\lambda_\infty(S)$ uniformly 
outside a compact set and that its averages 
over compact exhaustions converge 
to the same quantity. 
These conclusions will be applied directly 
in the proof of Theorem \ref{2025_12_01_thm1.7}.

\begin{theorem}{\rm (\cite[Theorems 6.2.1 and 6.2.2]{SST})}\label{SST_thms}
Assume that $S$ has exactly one end 
and that $\lambda_\infty(S) < 2\pi$. 
Then the following assertions hold:
\begin{enumerate}[{\rm (a)}]
\item For each $\ve >0$, 
there exists a compact subset 
$D_\ve \subset S$ such that 
\[
|\mu (A_x) - \lambda_\infty (S)| < \ve
\]
for all $x \in S \setminus D_\ve$. 
\item For an increasing exhaustion 
$\{E_i\}_{i\in\N}$ of $S$ by compact subsets, 
we have 
\[
\lim_{i\to\infty}
\frac{
\int_{E_i}\mu(A_x)\,d{\rm vol}
}
{
\int_{E_i}\,d{\rm vol}
}
=
\lambda_\infty(S).
\]
\end{enumerate}
\end{theorem}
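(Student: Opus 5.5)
The plan is to deduce both assertions from a Gauss--Bonnet argument on sectors bounded by extreme rays. The analytic input is the ideal boundary theory of Shioya and \cite{SST} for a surface with one end. Assertion (b) is then an averaging consequence of (a).

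\medskip\noindent\textbf{Setup at infinity.} Since $S$ is finitely connected and has exactly one end, fix a compact core $D_0\subset S$ with $S\setminus D_0$ an annular end. Given $\ve>0$, choose a compact $D_\ve\supset D_0$, containing a large neighbourhood of $D_0$, with
\[
\int_{S\setminus D_\ve}|K|\,d{\rm vol}<\ve .
\]
This is possible because the total curvature is absolutely finite.

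\medskip\noindent\textbf{Step 1: structure of $A_x$ for $x$ far out.} For $x\notin D_\ve$, I would show that the directions at $x$ \emph{not} tangent to rays form a single open arc of $\mathbb S^1_x$. Its two endpoints are tangent to extreme rays $\sigma_1,\sigma_2$ from $x$, and these rays bound a domain $D$ containing $D_0$. The reason is that every geodesic from $x$ which fails to be a ray must wind around the core. Consequently $A_x$ is the closed complementary arc, of angle $\theta=\mu(A_x)$, and it bounds the sector $D'=S\setminus \overline{D}$. When $\mu(A_x)$ is $0$ or the arc degenerates, the same argument applies with $\sigma_1=\sigma_2$.

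\medskip\noindent\textbf{Step 2: Gauss--Bonnet on both sectors.} Apply the Cohn-Vossen/Shioya Gauss--Bonnet formula for unbounded domains bounded by two rays. For the sector $D'$ with inner angle $\theta$, its ideal boundary length is
\[
L(D')=\theta-c(D').
\]
This is consistent with the flat cone model, where $c(D')=0$ and $L(D')=\theta$. The ideal boundary of the end has total length $\lambda_\infty(S)$. The key claim is that the complementary domain $D$, in which no direction at $x$ is a ray, has zero ideal boundary length. Granting this, $L(D')=\lambda_\infty(S)$, and therefore
\[
\mu(A_x)=\lambda_\infty(S)+c(D').
\]
Moreover $D'$ lies in $S\setminus D_\ve$ once $x$ is far out, so $|c(D')|<\ve$. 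This proves (a).

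\medskip\noindent\textbf{Step 3: averaging.} For (b), note that $\Ar(E_i)\to\infty$, since a complete non-compact surface with finite total curvature has at least linear area growth. Split
\[
\int_{E_i}\mu(A_x)\,d{\rm vol}
=\int_{E_i\cap D_\ve}\mu(A_x)\,d{\rm vol}+\int_{E_i\setminus D_\ve}\mu(A_x)\,d{\rm vol}.
\]
The first term is bounded by $2\pi\Ar(D_\ve)$, so after dividing by $\Ar(E_i)$ it tends to $0$. By (a), the second term equals $(\lambda_\infty(S)\pm\ve)\Ar(E_i\setminus D_\ve)$. Letting $i\to\infty$ and then $\ve\to0$ gives (b).

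\medskip\noindent\textbf{Main obstacle.} The hard part is Steps 1--2, specifically the claim that a sector containing no rays has zero ideal boundary length. Both this claim and the single-arc structure are where the hypothesis $\lambda_\infty(S)<2\pi$ enters. The hypothesis makes the ideal boundary a circle of length less than $2\pi$, so that Busemann-type and asymptotic-ray arguments (as in \cite{SST}, Ch.~6) identify ray directions with points of the ideal boundary seen from $x$. I would first try to prove the claim by contradiction. If the ideal boundary of $D$ had positive length, then taking limits of segments from $x$ to points diverging into $D$ would produce a ray from $x$ interior to $D$, contradicting the extremality of $\sigma_1$ and $\sigma_2$.
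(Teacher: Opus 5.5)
The paper gives no proof of this statement; it is quoted from \cite[Theorems 6.2.1 and 6.2.2]{SST}. Your outline therefore has to stand on its own, and Step~1 fails.

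\textbf{Step 1.} You claim that for $x$ outside a compact set the non-ray directions form a single open arc. This is false, and so is the heuristic behind it, that a geodesic from $x$ fails to be a ray only by winding around the core. Finite total curvature does not make the curvature compactly supported.

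Here is a counterexample. Take a smoothed flat cone of angle less than $2\pi$. Along one ray $\ell$ from the vertex, place small positively curved bumps $B_n$, each rotationally symmetric about its centre, with total curvature $\delta_n$ ($\sum_n\delta_n$ small), at distances tending to infinity. Reflection in $\ell$ is then an isometry.

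For $x\in\ell$ halfway out to $B_n$, consider directions in an interval of width about $\delta_n$ around the direction of $B_n$. The geodesics from $x$ in these directions are focused by $B_n$ and cross $\ell$ behind it, where they meet their mirror images, so they are not rays. Directions making a slightly larger angle with $\ell$ are still rays. Hence $A_x$ has gaps strictly inside the arc spanned by the extreme rays, for $x$ arbitrarily far out.

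\textbf{Consequence for Step 2.} Steps~1--2 only give
$\mu(A_x)\le\theta=\lambda_\infty(S)+c(D')<\lambda_\infty(S)+\ve$.
The lower bound in (a) is not proved.

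\textbf{A repair.} Apply your key claim to every gap. Each component of $\Sph^1_x\setminus A_x$ inside that arc is bounded by two rays from $x$, since $A_x$ is closed. Suppose the corresponding ray-free sector $E_j\subset D'$ has zero ideal boundary length. Then Gauss--Bonnet says the angle of $E_j$ at $x$ equals $c(E_j)\le\int_{E_j}K_+$, which matches the gap width $\approx\delta_n$ above. So the gaps have total measure at most $\int_{D'}K_+<\ve$.

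\textbf{What is still missing.} The key claim itself is only sketched. Your contradiction argument needs to know that segments from $x$ to points diverging toward the interior of an ideal arc of positive length cannot subconverge to the boundary rays. That is exactly the ideal-boundary machinery of \cite{SST}. Two further facts are asserted rather than argued: that non-ray directions exist at all for far-out $x$, and that the extreme rays from far-out $x$ stay outside $D_\ve$. These are where $\lambda_\infty(S)<2\pi$ actually has to be used.

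\textbf{Step 3.} It rests on a false claim. A complete non-compact surface with finite total curvature need not have infinite area; linear area growth requires $\mathrm{Ric}\ge0$. A Riemannian plane of revolution whose warping function equals $e^{-t}$ for large $t$ has one end, $c(S)=2\pi$, $\lambda_\infty(S)=0<2\pi$, and finite area. In that case the term over $E_i\cap D_\ve$ is not negligible after dividing by $\Ar(E_i)$. Statement (b) then amounts to $\mu(A_x)=0$ for almost every $x\in S$, and (a) says nothing about $x\in D_\ve$.

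Your averaging argument is correct whenever $\Ar(S)=\infty$. In particular it works when $\lambda_\infty(S)>0$, since then $\Ar(B_t)\sim\lambda_\infty(S)t^2/2$ by \cite[Theorem 5.2.1]{SST}. That is the only case used in Theorem~\ref{2025_12_01_thm1.7}, but the statement as quoted also includes $\lambda_\infty(S)=0$.
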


In the next section, we combine the radial 
curvature comparison techniques developed 
in Subsection \ref{sub2.1} 
with the total-curvature, ray, 
and Busemann-function results 
recalled above. 
When applied to $(\Sigma,g^*)$, 
these tools provide the geometric input needed 
to derive the topological and asymptotic 
conclusions stated in the main theorems.

%%%%Section 3%%%%%

\section{Proofs of Theorem \ref{2025_11_27_thm1.4}, 
Corollary \ref{2026_08_02_end_estimate}, 
and Theorems \ref{2025_11_27_thm1.6} and 
\ref{2025_12_01_thm1.7}}
\label{sec3}%%%%%%%%%%%%%%%%%%%%%%%%%%%%%%%%%%%%%%%%%%%%%%%%%%%%%%%%%%%%%%%%%%%%%%%%%%%%%%%%%

%%%New Part1:Theorems \ref{2025_11_27_thm1.4}の主張(1)から(4)の証明%%%

Throughout this section, we assume that the complete non-compact Riemannian $2$-manifold $(\Sigma, g)$ 
satisfies the standing assumptions 
\eqref{2025_12_01_whole_assumption} 
on the operator $\cL_g$ in Eq.\,\eqref{2025_12_01_Schrodinger}. 
Moreover, we fix a compact subset 
$C \subset \Sigma$ and a positive 
smooth function $u$ on $\Sigma$
furnished by Theorem \ref{FCmetric}~(1).
Enlarging $C$ if necessary, we may and do assume that $\Int C\neq\emptyset$. 
We then let $g^*=u^2g$ be the Fischer-Colbrie metric on $\Sigma$
and denote by $d^*$ the distance induced by $g^*$. For any $x \in \Sigma$ and $r>0$, $B^*_r(x)$ denotes the open metric ball in $(\Sigma, g^*)$, whose area is given by 
$
\Ar (B^*_r(x)) := \int_{B^*_r(x)}d{\rm vol}^*
$. 
Furthermore, $K^*$ denotes 
the Gaussian curvature of $(\Sigma,g^*)$ 
as in Theorem \ref{FCmetric}~(2-1), 
and $\lambda^*_\infty(\Sigma)$ is the curvature 
at infinity given by Eq.\,\eqref{curvature_at_infinity}. 
We also fix an arbitrary point
\[
p\in\Int C.
\]

\subsection{
Proof of Theorem \ref{2025_11_27_thm1.4}
}

\noindent
\textbf{Proof of Assertion {\rm (1)}.}
Since $C$ is compact and $(\Sigma,g^*)$
is complete, there exists $r_0>0$ such that
\[
C\subset B_{r_0}^*(p),
\]
and the Hopf--Rinow theorem shows that 
the closed ball $\overline{B_{r_0}^*(p)}$ is compact. 
Set
\[
\kappa_0
:=
\min\left\{
0,\,
\min_{x\in\overline{B_{r_0}^*(p)}}K^*(x)
\right\}.
\]
Then $\kappa_0$ is a finite non-positive constant.

Choose a smooth cutoff function
$\eta:[0,\infty)\to[0,1]$ such that
\[
\eta(t)=1\quad\text{for }0\le t\le r_0,
\qquad
\eta(t)=0\quad\text{for }t\ge r_0+1,
\]
and define
\[
\wt K(t):=\kappa_0\eta(t)
\]
for all $t\ge0$. Then $\wt K$ is smooth and non-positive on $[0,\infty)$.

Let $\gamma:[0,a]\to\Sigma$ be any 
unit-speed minimizing geodesic emanating 
from $p$. 
If $0\le t\le\min\{a,r_0\}$, then
\[
K^*(\gamma(t))\ge\kappa_0=\wt K(t).
\]
If $r_0<t\le a$, then
\[
d^*(p,\gamma(t))=t>r_0,
\]
and hence $\gamma(t)\notin C$. 
Therefore, by item {\rm (2-1)}
of Theorem~\ref{FCmetric},
\[
K^*(\gamma(t))\ge0\ge\wt K(t).
\]
Thus, the radial curvature of $(\Sigma,g^*)$ at $p$
is bounded from below by $\wt K$.

Let $m:[0,\infty)\to\R$ be the solution of
\[
m''(t)+\wt K(t)m(t)=0 
\]
with $m(0)=0$ and $m'(0)=1$. 
Since $\wt K\le0$, we have
\[
m''(t)=-\wt K(t)m(t)\ge0
\]
as long as $m(t)\ge0$. 
Hence $m'(t)\ge1$ and consequently 
$m(t)\ge t>0$ for every $t>0$. 
Thus,
\[
(\wt{\Sigma}, \tilde{g})
:=
\bigl([0,\infty)\times_m\Sph^1, 
dt^2+m(t)^2d\theta^2\bigr)
\]
defines a non-compact model surface of revolution with base point
$\tilde{p}$, whose radial curvature function is precisely $\wt K$, where $\Sph^1:=\{v \in \R^2\,|\, \|v\|=1\}$. 

Moreover, since $\wt K(t)=0$ for all $t\ge r_0+1$, we have
\begin{equation}\label{2026_08_02_weighted_curvature}
\int_0^\infty t\,\wt K(t)\,dt
=
\int_0^{r_0+1} t\,\wt K(t)\,dt
>-\infty.
\end{equation}
This proves assertion {\rm (1)}. 

\medskip
\noindent
\textbf{Proof of Assertions {\rm (2)} and {\rm (3)}.}
By Assertion {\rm (1)}, the radial curvature of $(\Sigma,g^*)$ at $p$
is bounded from below by $\wt K$. 
By the Bishop--Gromov-type volume comparison for radial curvature \cite{Mao1}, 
the ratio
\[
\frac{\Ar(B_t^*(p))}{\Ar(B_t(\tilde{p}))}
\]
is non-increasing in $t$. Since this ratio tends to $1$ as $t\downarrow0$, the limit
\[
\lim_{t\to\infty}
\frac{\Ar(B_t^*(p))}{\Ar(B_t(\tilde{p}))}
\]
exists in $[0,1]$.

By items {\rm (2-2)} and {\rm (2-3)} of Theorem
\ref{FCmetric}, $\Sigma$ is finitely connected and
$(\Sigma,g^*)$ admits finite total curvature. It then follows from
the isoperimetric inequality \cite[Theorem 5.2.1]{SST} that
\begin{equation}\label{2025_12_02_proof1_eq3}
\lambda_\infty^*(\Sigma)
=
\lim_{t\to\infty}
\frac{2\Ar(B_t^*(p))}{t^2}.
\end{equation}

On the other hand, by Assertion {\rm (1)}, $\wt K$ is non-positive and compactly supported. In particular, Eq.\,\eqref{2026_08_02_weighted_curvature}
implies 
\[
\int_0^\infty t\,\wt K(t)\,dt>-\infty.
\]
Let $m$ be the warping function of $\wt\Sigma$, so that
$\tilde{g}=dt^2+m(t)^2d\theta^2$ and 
\[
m''(t)+\wt K(t)m(t)=0,
\]
with $m(0)=0$ and $m'(0)=1$. 
Since $\wt K(t)=0$ for all $t\ge r_0+1$, we have
\[
m''(t)=0
\]
for all $t\ge r_0+1$. 
Hence, $m'$ is constant on $[r_0+1,\infty)$, and therefore
\begin{equation}\label{2026_08_02_mprime_limit}
\lim_{t\to\infty}m'(t)<\infty.
\end{equation}
Since $\wt K\le0$, we have $m''\ge0$, 
and hence $m'$ is non-decreasing. 
Thus,
\[
1\le m'(t)\le \lim_{s\to\infty}m'(s)<\infty.
\]
By the generalized l'Hospital theorem (cf.\,\cite[Lemma 5.2.1]{SST}) and Eq.\,\eqref{2026_08_02_mprime_limit},  
we have 
\begin{equation}\label{2026_07_30_total_curvature}
\lim_{t\to\infty}
\frac{\int_0^t m(r)\,dr}{t^2}
=
\frac{1}{2}
\lim_{t\to\infty}\frac{m(t)}{t}
=
\frac12
\lim_{t\to\infty}m'(t)
<\infty.
\end{equation}
We set
\[
\alpha^*
:=
\lim_{t\to\infty}
\frac{\int_0^t m(r)\,dr}{t^2}.
\]
Thus, 
Eq.\,\eqref{2026_07_30_total_curvature} shows
\begin{equation}\label{2026_08_02_alpha_mprime}
2\alpha^*
=
\lim_{t\to\infty}m'(t).
\end{equation}

Since
\[
\Ar(B_t(\tilde{p}))
=
2\pi\int_0^t m(r)\,dr,
\]
Eq.\,\eqref{2025_12_02_proof1_eq3} shows 
\begin{equation}\label{2025_12_02_proof1_eq4}
\lambda_\infty^*(\Sigma)
=
\lim_{t\to\infty}\frac{2\Ar(B_t^*(p))}{t^2}
=
4\pi
\lim_{t\to\infty}
\left\{
\frac{\Ar(B_t^*(p))}{\Ar(B_t(\tilde{p}))}
\frac{\int_0^t m(r)\,dr}{t^2}
\right\}.
\end{equation}
Hence,
\begin{equation}\label{2025_12_02_proof1_eq5}
\lambda_\infty^*(\Sigma)
=
4\alpha^*\pi
\lim_{t\to\infty}
\frac{\Ar(B_t^*(p))}{\Ar(B_t(\tilde{p}))}.
\end{equation}
This proves Assertion {\rm (2)}.

We next prove Assertion {\rm (3)}. Since the area element of
$\wt\Sigma$ is given by
\[
d\wt\Sigma=m(r)\,dr\,d\theta,
\]
and since
\[
\wt K(r)=-\frac{m''(r)}{m(r)},
\]
we obtain
\begin{align*}
c(\wt\Sigma)
&=
\lim_{t\to\infty}
\int_0^{2\pi}\int_0^t
\wt K(r)m(r)\,dr\,d\theta 
=
-2\pi\lim_{t\to\infty}
\int_0^t m''(r)\,dr \\[1mm]
&=
2\pi\left(
1-\lim_{t\to\infty}m'(t)
\right) =
2\pi(1-2\alpha^*).
\end{align*}
In particular, $c(\wt\Sigma)$ is finite. 
Since $\wt K\le0$, we have $c(\wt\Sigma)\le0$. 
Hence
\[
\alpha^*\ge\frac{1}{2}.
\]
Therefore $\alpha^*\in[1/2,\infty)$, 
and Assertion {\rm (3)} follows.

\medskip
\noindent
\textbf{Proof of Assertion {\rm (4)}.}
By Assertion {\rm (1)}, the radial curvature 
of $(\Sigma,g^*)$ at $p$ is bounded from below 
by the radial curvature function $\wt{K}$ 
of the model surface $(\wt{\Sigma},\tilde{p})$. Moreover, 
Eq.\,\eqref{2026_08_02_weighted_curvature} 
implies 
\[
\int_0^\infty (-t\,\wt{K}(t))\,dt<\infty.
\]
Since $\wt{K}\le0$, the model surface $\wt{\Sigma}$ 
is Cartan--Hadamard; 
in particular, the sector condition required 
in \cite[Theorem 5.3]{KT1} is 
automatically satisfied. 
Thus, the setting of that theorem applies. 
More precisely, the argument in the proof of
\cite[Theorem 2.2]{KT1}, which is used in the proof of
\cite[Theorem 5.3]{KT1}, rules out the existence of a sequence
of critical points $\{q_i\}_{i\in\N}$ of $d_p^*$ such that
\[
\lim_{i\to\infty}d_p^*(q_i)=\infty.
\]
Hence, the set of critical points of $d_p^*$ is bounded.
Since $(\Sigma,g^*)$ is complete, the Hopf--Rinow theorem yields a compact subset $\Omega\subset\Sigma$ 
containing $p$ and all critical points of $d_p^*$.

Since $C$ is compact, we can choose $R>0$ sufficiently large so that
\[
C\cup \Omega\subset \ol{B_R^*(p)}.
\]
Thus, $\ol{B_R^*(p)}$ contains both $C$ and all critical points of
$d_p^*$, which proves Assertion {\rm (4)}.

\medskip

This completes the proof of Theorem~\ref{2025_11_27_thm1.4}.$\qedd$

%%%New Part2:Corollary \ref{2026_08_02_end_estimate}の証明%%%

\subsection{Proof of Corollary \ref{2026_08_02_end_estimate}}\label{Proof_Cor_end_estimate}
The following argument is inspired by the proof 
of \cite[Theorem C]{KO} and also corrects a gap in that proof. 
Since the present setting requires several additional steps, we include the proof in full. 
Let $(\wt{\Sigma}, \tilde{g})$ and $\alpha^*$ be as in Theorem \ref{2025_11_27_thm1.4}, 
and let $\cE(\Sigma)$ denote the set of all ends of $\Sigma$. Since $(\Sigma, g^*)$ is complete and noncompact, $\Sigma$ has at least one end. 
If $\#\cE(\Sigma)=1$, then
\[
\#\cE(\Sigma)=1\le 4\alpha^*
\]
because $\alpha^*\ge 1/2$. 
Thus, we may assume henceforth that $\#\cE(\Sigma)\ge 2$. 

For each ${\bf e}\in\cE(\Sigma)$, put
\[
A_{\bf e}
:=
\left\{
v\in(\Sph_p^1)^*
\ \middle|\
\begin{array}{c}
\text{the ray $\gamma_v:[0,\infty)\to(\Sigma,g^*)$ satisfying}\\
\text{$\gamma_v(0)=p$ and $\gamma_v'(0)=v$ belongs to ${\bf e}$}
\end{array}
\right\},
\]
where $(\Sph_p^1)^*:=\{v\in T_p\Sigma\mid \|v\|^*=1\}$.
By a standard limiting argument, each $A_{\bf e}$ is nonempty. 
Indeed, choose a sequence $\{q_i\}$ diverging to infinity
within the end ${\bf e}$, and let $\sigma_i$ be a minimizing
geodesic segment from $p$ to $q_i$. 
Passing to a subsequence, 
the initial vectors $\sigma_i'(0)$ converge 
to a unit vector 
$v\in(\Sph_p^1)^*$, and the corresponding segments converge locally uniformly to a ray $\gamma_v$. 
For every sufficiently large $\varrho>0$, the portion
\[
\sigma_i([\varrho,d^*(p,q_i)])
\]
lies in the component of
$\Sigma\setminus\overline{B_{\varrho}^*(p)}$
representing ${\bf e}$ for all sufficiently large $i$. 
Passing to the limit, we see that $\gamma_v$ also belongs
to the end ${\bf e}$. Hence $v\in A_{\bf e}$.

We claim that, for any two distinct ends
${\bf e}_1,{\bf e}_2\in\cE(\Sigma)$ and any
$v_j\in A_{{\bf e}_j}$, $j=1,2$, one has
\begin{equation}\label{2025_12_02_proof1_eq7}
\angle(v_1,v_2)
\ge
\frac{2\pi^2}{2\pi-c(\wt{\Sigma})}.
\end{equation}
Note that, if $\angle(v_1,v_2)=\pi$, then, since $c(\wt{\Sigma})\le0$,
\[
\frac{2\pi^2}{2\pi-c(\wt{\Sigma})}
\le \pi
=
\angle(v_1,v_2),
\]
and hence Eq.\,\eqref{2025_12_02_proof1_eq7} follows.
We may therefore assume that
\begin{equation}\label{2025_12_02_proof1_eq_angle}
\angle(v_1,v_2)<\pi.
\end{equation}

Let $\gamma_j:=\gamma_{v_j}$, $j=1,2$, be the corresponding rays
emanating from $p$.  Thus
\[
\gamma_j(0)=p,\qquad
\gamma_j'(0)=v_j\in A_{{\bf e}_j},
\qquad
j=1,2.
\]
For each $i\in\N$, set
\[
x_i:=\gamma_1(i),
\qquad
y_i:=\gamma_2(i).
\]
Since ${\bf e}_1\ne{\bf e}_2$, there exists a compact subset
$\cD \subset\Sigma$ such that, for all sufficiently large $i$,
the points $x_i$ and $y_i$ lie in distinct connected components
of $\Sigma\setminus \cD$.
Let $\mu_i$ be a minimizing geodesic segment joining $x_i$ to $y_i$.
Then $\mu_i$ intersects $\cD$.
Choose a point $z_i\in\mu_i\cap \cD$, and parametrize 
$\mu_i$ by arc length so that
$\mu_i:[-a_i,b_i]\to(\Sigma,g^*)$, 
$\mu_i(-a_i)=x_i$, $\mu_i(0)=z_i$, and $\mu_i(b_i)=y_i$, 
where $a_i:=d^*(x_i,z_i)$ and $b_i=d^*(z_i,y_i)$. 
Fix $\rho_0>0$ such that
\[
\cD\subset\overline{B_{\rho_0}^*(p)}.
\]
Since $\angle(v_1,v_2)<\pi$, the minimizing geodesic segment
$\mu_i$ cannot pass through $p$.\footnote{Indeed, suppose $p\in\mu_i$. Then we have $d^*(x_i,y_i)
=d^*(x_i,p)+d^*(p,y_i)$. This implies $\angle(v_1,v_2) = \angle (x_ipy_i)= \pi$. This contradicts $\angle(v_1,v_2) < \pi$.} 
Hence
\begin{equation}\label{2026_07_29_upper}
0<d^*(p,z_i)\le \rho_0.
\end{equation}
Moreover, by the triangle inequality, we have 
\[
a_i\ge d^*(p,x_i)-d^*(p,z_i)\ge i-\rho_0,
\quad
b_i\ge d^*(p,y_i)-d^*(p,z_i)\ge i-\rho_0,
\]
and hence 
\begin{equation}\label{2025_12_02_proof1_eq_lengths_to_infty}
\lim_{i \to \infty}a_i =\infty,
\qquad
\lim_{i \to \infty}b_i =\infty. 
\end{equation} 

Fix $i\in\N$ sufficiently large.
Consider the geodesic triangle $\triangle(px_i y_i)$ 
on $(\Sigma,g^*)$ whose sides are 
$\gamma_1|_{[0,i]}$, $\gamma_2|_{[0,i]}$, and $\mu_i$. 
Since $\wt K\le0$, $\wt\Sigma$ has no cut points. 
Hence Theorem~\ref{new_TCT} is applicable 
with $\delta_0=\pi$ in view of 
Eq.\,\eqref{2025_12_02_proof1_eq_angle}. 
Applying Theorem~\ref{new_TCT} 
to $\triangle(px_i y_i)$, 
we obtain a comparison triangle
$\triangle(\tilde{p}\tilde{x}_i\tilde{y}_i)$\footnote{
Note that $\triangle(\tilde{p}\tilde{x}_i\tilde{y}_i)$ 
is also non-degenerate, 
for $d^*(x_i,y_i) < d^*(p,x_i)+d^*(p,y_i)$. 
In particular, since $\tilde{z}_i\not=\tilde{p}$, 
$\tilde{d}(\tilde{p},\tilde{z}_i)>0$ holds.} 
on $(\wt{\Sigma},\tilde{p})$, 
formed by geodesic segments 
$\tilde{\gamma}_{1,i}$, $\tilde{\gamma}_{2,i}$, 
and $\tilde{\mu}_i$, corresponding, 
respectively, to $\gamma_1|_{[0,i]}$, 
$\gamma_2|_{[0,i]}$, and $\mu_i$, such that
\begin{equation}\label{2025_12_02_proof1_eq_length_1}
\tilde{d}(\tilde{p},\tilde{x}_i)
= d^*(p,x_i)
= i,
\qquad
\tilde{d}(\tilde{p},\tilde{y}_i)
= d^*(p,y_i)
= i,
\end{equation}
\[
\tilde{d}(\tilde{x}_i,\tilde{y}_i)
= d^*(x_i,y_i),
\]
and
\begin{equation}\label{2025_12_02_proof1_eq8}
\angle(x_i p y_i)
\ge
\angle(\tilde{x}_i\tilde{p}\tilde{y}_i),
\end{equation}
\begin{equation}\label{2025_12_02_proof1_eq9}
\angle(p x_i y_i)
\ge
\angle(\tilde{p}\tilde{x}_i\tilde{y}_i),
\qquad
\angle(p y_i x_i)
\ge
\angle(\tilde{p}\tilde{y}_i\tilde{x}_i).
\end{equation}

Let $\tilde{z}_i\in\tilde{\mu}_i$ be the point corresponding to $z_i\in\mu_i$; namely, 
\begin{equation}\label{2026_07_28_new1}
\tilde{d}(\tilde{x}_i,\tilde{z}_i)=a_i,
\qquad
\tilde{d}(\tilde{z}_i,\tilde{y}_i)=b_i.
\end{equation}
In particular, the domain of 
$\tilde{\mu}_i$ is $[-a_i,b_i]$, and 
$\tilde{\mu}_i$ satisfies 
$\tilde{\mu}_i(-a_i)=\tilde{x}_i$, 
$\tilde{\mu}_i(0)=\tilde{z}_i$, 
and $\tilde{\mu}_i(b_i)=\tilde{y}_i$. 
After translating the arc-length parameters,
Lemma \ref{2026_08_08_alexandrov_distance} 
implies 
\[
\tilde{d}(\tilde{p},\tilde{\mu}_i(s))
\le
d^*(p,\mu_i(s))
\]
for every $s\in[-a_i,b_i]$, and hence, 
together with Eq.\,\eqref{2026_07_29_upper}, 
this shows 
\begin{equation}\label{2025_12_02_proof1_eq10}
\tilde d(\tilde{p},\tilde{z}_i)
\le 
d^*(p,z_i)
\le \rho_0.
\end{equation}
Thus, for any sufficiently large $i$, 
we have
\[
\tilde{\mu}_i([-a_i,b_i])
\cap
\ol{B_{\rho_0}(\tilde{p})}
\ne
\emptyset.
\]
Since $\tilde{\mu}_i(0)=\tilde{z}_i$ 
and $\tilde{d}(\tilde{p},\tilde{z}_i)\le \rho_0$ 
by Eq.\,\eqref{2025_12_02_proof1_eq10}, 
for every $T>0$ and every sufficiently 
large $i$ satisfying $[-T,T]\subset[-a_i,b_i]$, 
we have
\[
\tilde{\mu}_i([-T,T])
\subset
\ol{B_{\rho_0+T}(\tilde{p})}.
\]
Since $(\wt{\Sigma},\tilde{g})$ is complete, 
the Hopf--Rinow theorem implies that 
the closed ball $\ol{B_{\rho_0+T}(\tilde{p})}$
is compact. 
By 
Eq.\,\eqref{2025_12_02_proof1_eq_lengths_to_infty},
we may choose a subsequence 
$\{\tilde\mu_{i_j}\}_{j\in\N}$ of
$\{\tilde\mu_i\}_{i\in\N}$ 
such that
\[
a_{i_j}\ge j,
\qquad
b_{i_j}\ge j
\]
for every $j\in\N$. 
Applying the Arzel\`a--Ascoli theorem 
and a diagonal argument to 
$\{\tilde\mu_{i_j}\}_{j\in\N}$, 
we obtain a locally uniformly convergent subsequence
\[
\{\tilde\mu_{i_k}\}_{k\in\N}:=\{\tilde\mu_{i_{j_k}}\}_{k\in\N}
\]
whose limit is a complete geodesic
\[
\tilde\mu_\infty:\R\to\wt{\Sigma}.
\]
Writing $a_{i_k}:=a_{i_{j_k}}$ and $b_{i_k}:=b_{i_{j_k}}$, we have $a_{i_k}\ge k$ and $b_{i_k}\ge k$, 
since $j_k\ge k$ for every $k\in\N$. 

We finally verify that $\tilde\mu_\infty$ is a line.
Fix $s,t\in\R$. 
Choose $k$ sufficiently large 
so that $s,t\in[-k,k]$. 
Since $[-k,k]\subset [-a_{i_k},b_{i_k}]$, 
the restriction of $\tilde\mu_{i_k}$ 
to $[-k,k]$ is a minimizing geodesic segment.
Hence, we have 
$
\tilde d\bigl(
\tilde\mu_{i_k}(s),
\tilde\mu_{i_k}(t)
\bigr)
=
|s-t|$. Letting $k\to\infty$, we obtain
\[
\tilde d\bigl(
\tilde\mu_\infty(s),
\tilde\mu_\infty(t)
\bigr)
=
|s-t|.
\]
Thus $\tilde\mu_\infty$ is minimizing 
on every compact interval, 
and hence $\tilde\mu_\infty$ is a line.

%%%%

Passing to a further subsequence if necessary, 
we may assume, 
by the Arzel\`a--Ascoli theorem 
and Eq.\,\eqref{2025_12_02_proof1_eq_length_1}, 
that the sequences 
$\{\tilde\gamma_{j,i_k}\}_{k\in\N}$, $j=1,2$, 
converge locally uniformly to 
rays 
$\tilde\gamma_{j,\infty}:[0,\infty)\to\wt{\Sigma}$, $j=1,2$. 
Set 
\[
\omega_p:=\angle(v_1,v_2)=
\angle(x_{i_k}p y_{i_k}), \quad 
\tilde\omega_{\tilde p}
:=
\angle\bigl(
\tilde\gamma_{1,\infty}'(0),
\tilde\gamma_{2,\infty}'(0)
\bigr)
=
\lim_{k\to\infty}
\angle(\tilde x_{i_k}\tilde p\tilde y_{i_k}).
\]
By Eq.\,\eqref{2025_12_02_proof1_eq8}, we have
\begin{equation}\label{2025_12_02_proof1_eq12}
\omega_p\ge\tilde\omega_{\tilde p}.
\end{equation}
On the other hand, 
since $x_{i_k}$ and $y_{i_k}$ diverge to infinity 
along the fixed rays $\gamma_1$ and 
$\gamma_2$, respectively, while each 
minimizing segment $\mu_{i_k}$ intersects 
the fixed compact set $\cD$, 
the generalized first variation formula of 
Itoh--Tanaka \cite[Lemma 2.1]{IT}, applied at the two endpoints, shows 
\[
\lim_{k\to\infty}\angle(p x_{i_k} y_{i_k})=0,
\qquad
\lim_{k\to\infty}\angle(p y_{i_k} x_{i_k})=0.
\]
(This is the classical argument of Cohn-Vossen; see also \cite{CV2}.) 
Consequently, Eq.\,\eqref{2025_12_02_proof1_eq9} implies
\begin{equation}\label{2025_12_02_proof1_eq13}
\lim_{k\to\infty}
\angle(\tilde p\tilde x_{i_k}\tilde y_{i_k})=0,
\qquad
\lim_{k\to\infty}
\angle(\tilde p\tilde y_{i_k}\tilde x_{i_k})=0.
\end{equation}

By Eq.\,\eqref{2025_12_02_proof1_eq12} and
Eq.\,\eqref{2025_12_02_proof1_eq_angle}, we have
\[
\tilde\omega_{\tilde p}
\le \omega_p<\pi.
\]
Since $\wt\Sigma$ is Cartan--Hadamard, 
the closed sector bounded by the two meridians 
is geodesically convex. 
Hence each $\tilde\mu_{i_k}$ is contained 
in the closed sector bounded by 
$\tilde\gamma_{1,i_k}$ and 
$\tilde\gamma_{2,i_k}$. 
Passing to the limit, we see that 
$\tilde\mu_\infty$ is contained 
in the closed sector bounded by 
the two limiting meridians 
$\tilde\gamma_{1,\infty}$ and 
$\tilde\gamma_{2,\infty}$. 
Consequently, $\tilde\mu_\infty$ cannot pass through $\tilde p$; indeed, a line through $\tilde p$ has two opposite tangent directions and therefore cannot be 
contained in a sector of opening angle strictly less than $\pi$. 

Let $\wt X\subset\wt\Sigma$ be the sector of opening angle
$\tilde\omega_{\tilde p}$ bounded 
by the limiting meridians $\tilde\gamma_{1,\infty}([0,\infty))$ 
and $\tilde\gamma_{2,\infty}([0,\infty))$ 
and containing $\tilde\mu_\infty$. 
Let $\wt Y\subset\wt X$ be the portion of the half-plane bounded by $\tilde\mu_\infty$ that does not contain 
$\tilde p$ and lies in $\wt X$, 
and put
\[
\wt Z:=\ol{\wt X\setminus\wt Y}.
\]
Thus, $\wt Z$ is the domain bounded by the two 
limiting meridians and $\tilde\mu_\infty$ that contains 
$\tilde p$. (See Figure \ref{fig:domains-XYZ} below.)

\begin{figure}[htbp]
\centering

\begin{tikzpicture}
[
x=1.15cm,
y=1.05cm,
line cap=round,
line join=round
]

\begin{scope}
\clip (-0.7,-3.15) rectangle (8.6,3.15);

% Base point

\coordinate (p) at (0,0);

% Two limiting meridians

\draw[thick]
(p) .. controls (2.0,0.15) and (5.2,1.45) .. (10.0,3.45);
\draw[thick]
(p) .. controls (2.0,-0.15) and (5.2,-1.45) .. (10.0,-3.45);

% The complete geodesic $\widetilde\mu_\infty$.
% It remains strictly inside $\widetilde X$ and approaches
% the two limiting meridians toward the right.

\draw[thick]
(8.15,2.58) .. controls (7.00,2.20) 
and (5.55,1.35) .. (4.85,0.55) .. controls (4.10,-0.30) 
and (5.45,-1.45) .. (8.15,-2.58);
\end{scope}

% Base point and its label

\coordinate (p) at (0,0); 
\fill (p) circle (1.3pt);
\node[left=2pt] at (p) {$\tilde p$};

% Labels of the limiting meridians

\node[above] at (3.6,1.05)
{$\tilde\gamma_{1,\infty}$};
\node[below] at (3.6,-1.05)
{$\tilde\gamma_{2,\infty}$};

% Label of the complete geodesic

\node[right=3pt] at (5.0,0.45)
{$\tilde\mu_\infty$};

% Labels of the two subdomains

\node at (2.8,0)
{$\widetilde Z$};
\node at (6.75,0)
{$\widetilde Y$};

\end{tikzpicture}

\caption{
A schematic illustration of the domains 
$\widetilde Y$ and 
$\widetilde Z=\overline{\widetilde X\setminus\widetilde Y}$, 
so that 
$\widetilde X=\widetilde Y\cup\widetilde Z$. 
The line $\widetilde\mu_\infty$ separates 
these two domains inside the sector bounded 
by the limiting meridians 
$\widetilde\gamma_{1,\infty}$ 
and $\widetilde\gamma_{2,\infty}$.
}
\label{fig:domains-XYZ}
\end{figure}
\FloatBarrier

%%%%%%

We denote by $c(\wt X)$, $c(\wt Y)$, and $c(\wt Z)$ 
the total curvatures of $\wt X$, $\wt Y$, and $\wt Z$, respectively. 
Set 
\[
\wt\triangle_k
:=
\triangle(\tilde p\tilde x_{i_k}\tilde y_{i_k}).
\]
The Gauss--Bonnet theorem implies 
\begin{equation}\label{2025_12_02_proof1_eq14}
c(\wt\triangle_k)
=
\angle(\tilde x_{i_k}\tilde p\tilde y_{i_k})
+\angle(\tilde p\tilde x_{i_k}\tilde y_{i_k})
+\angle(\tilde p\tilde y_{i_k}\tilde x_{i_k})
-\pi,
\end{equation}
where $c(\wt\triangle_k)$ denotes the total curvature 
of $\wt\triangle_k$. 
Let $\chi_k$ and $\chi_Z$ denote the characteristic 
functions of $\widetilde\triangle_k$ and $\widetilde Z$, respectively. 
The locally uniform convergence of the three sides, together with the choice of these domains, implies that $\chi_k\to\chi_Z$ almost everywhere on $\widetilde\Sigma$. 
Indeed, every point outside the three limiting boundary
geodesics has a positive distance from each of these
geodesics; hence, on a sufficiently small neighborhood of
such a point, the domains $\widetilde\triangle_k$ and
$\widetilde Z$ agree for all sufficiently large $k$.
Since the limiting boundary has measure zero, this
justifies the almost-everywhere convergence above. 
Since $\widetilde K\le0$ and
\[
\int_{\widetilde\Sigma}|\widetilde K|\,d\wt{\Sigma}<\infty,
\]
the dominated convergence theorem shows 
\begin{equation}\label{2026_07_30_convergence}
\lim_{k\to\infty}c(\widetilde\triangle_k)
=
\lim_{k\to\infty}
\int_{\widetilde\Sigma}\chi_k\widetilde K\,d\wt{\Sigma}
=
\int_{\widetilde\Sigma}\chi_Z\widetilde K\,d\wt{\Sigma}
=c(\widetilde Z).
\end{equation}
By Eqs.\,\eqref{2025_12_02_proof1_eq13} and \eqref{2026_07_30_convergence}, 
taking the limit in Eq.\,\eqref{2025_12_02_proof1_eq14}, 
we obtain
\begin{equation}\label{2025_12_02_proof1_eq15}
c(\wt Z)
=
\lim_{k\to\infty}c(\wt\triangle_k)
=
\tilde\omega_{\tilde p}-\pi.
\end{equation}
Since $\wt K\le0$, we have $c(\wt Y)\le0$. 
Moreover, by the rotational symmetry of $\wt\Sigma$, we have
\[
c(\wt X)
=
\frac{\tilde\omega_{\tilde p}}{2\pi}
c(\wt\Sigma).
\]
Thus, Eq.\,\eqref{2025_12_02_proof1_eq15} yields
\begin{equation}\label{2025_12_02_proof1_eq16}
\frac{\tilde\omega_{\tilde p}}{2\pi}c(\wt\Sigma)
=
c(\wt X)
=
c(\wt Y)+c(\wt Z)
\le
c(\wt Z)
=
\tilde\omega_{\tilde p}-\pi.
\end{equation}
Since $2\pi-c(\wt\Sigma)>0$, 
Eqs.\,\eqref{2025_12_02_proof1_eq12} and \eqref{2025_12_02_proof1_eq16} give 
\begin{equation}\label{2025_12_02_proof1_eq17}
\frac{2\pi^2}{2\pi-c(\wt\Sigma)}
\le
\tilde\omega_{\tilde p}
\le
\omega_p
=
\angle(v_1,v_2).
\end{equation}
This proves the claim \eqref{2025_12_02_proof1_eq7}.

For any nonempty subsets $A,B\subset(\Sph_p^1)^*$, 
define
\[
\dist(A,B)
:=
\inf\{\angle(v,w)\mid v\in A,\ w\in B\}.
\]
By setting
\[
\lambda
:=
\frac{\pi^2}{2\pi-c(\wt\Sigma)},
\]
it follows from Eq.\,\eqref{2025_12_02_proof1_eq17} that, 
for any distinct ends ${\bf e},{\bf f}\in\cE(\Sigma)$,
\begin{equation}\label{2025_12_02_proof1_eq18}
\dist(A_{\bf e},A_{\bf f})\ge 2\lambda.
\end{equation}
For each ${\bf e}\in\cE(\Sigma)$, choose
$v_{\bf e}\in A_{\bf e}$. Then
Eq.\,\eqref{2025_12_02_proof1_eq18} implies that the open balls
\[
\left\{\B_\lambda(v_{\bf e})\right\}_{{\bf e}\in\cE(\Sigma)}
\]
are mutually disjoint in $(\Sph_p^1)^*$. 
Let $\mathcal F$ be any finite subset of $\cE(\Sigma)$. 
Since $c(\wt\Sigma)\le0$, we have $\lambda\le\pi/2$.
Hence each $\B_\lambda(v_{\bf e})$ 
has length $2\lambda$. 
The packing argument therefore shows 
\[
2\lambda\,\#\mathcal F\le2\pi.
\]
Thus
\[
\#\mathcal F\le\frac{\pi}{\lambda}
\]
for every finite subset $\mathcal F\subset\cE(\Sigma)$. Consequently,
$\cE(\Sigma)$ itself is finite and
\[
\#\cE(\Sigma)
\le
\frac{\pi}{\lambda}
=
2-\frac{c(\wt\Sigma)}{\pi}
=
4\alpha^*.
\]
This proves the corollary. $\qedd$

%%%%%%%%%%%%%%%%%%%%%%%%%%%%%%%%%%%%
%%%New Part3:Theorem \ref{2025_11_27_thm1.6}の証明%%%
%%%%%%%%%%%%%%%%%%%%%%%%%%%%%%%%%%%%

\subsection{Proof of Theorem \ref{2025_11_27_thm1.6}}

Let $F$ be the function defined by
Eq.\,\eqref{2025_12_13_thm2.7_fn}.
Since $\dim\Sigma=2$, we have $m=2$ in the notation of
Theorem~\ref{2025_12_13_thm2.7}, and hence
\[
F(r)=\frac{r}{\pi}
\]
for every $r\in[0,\pi]$. 
Moreover, since $\wt K\le0$, we have 
$\wt K_-=\min\{0,\wt K\}=\wt K$, and hence
$\delta(\wt K_-)=\delta(\wt K)$, where $\delta(\wt K_-)$ is defined 
by Eq.\,\eqref{2025_12_13_thm2.7_const}.

It follows from Theorem \ref{2025_11_27_thm1.4} that
the radial curvature of $(\Sigma,g^*)$ at $p$
is bounded from below by $\wt K$, and that
\begin{equation}\label{2025_11_27_thm1.6_proof_eq1}
\lim_{t\to\infty}
\frac{\Ar(B_t^*(p))}{\Ar(B_t(\tilde p))}
=
\frac{\lambda_\infty^*(\Sigma)}{4\alpha^*\pi}.
\end{equation}
Since
\[
F(\delta(\wt K_-))
=
F(\delta(\wt K))
=
\frac{1}{2}
\exp\left(
\int_0^\infty t\wt K(t)\,dt
\right),
\]
Eqs.\,\eqref{2025_11_27_thm1.6_eq1} and
\eqref{2025_11_27_thm1.6_proof_eq1} imply 
\begin{align*}
\lim_{t\to\infty}
\frac{\Ar(B_t^*(p))}{\Ar(B_t(\tilde p))}
&=
\frac{\lambda_\infty^*(\Sigma)}{4\alpha^*\pi}
\\[1mm]
&\ge
\frac12
\left(
2-
\exp\left(
\int_0^\infty t\wt K(t)\,dt
\right)
\right)
\\[1mm]
&=
1-F(\delta(\wt K_-)).
\end{align*}
Thus the hypothesis of Theorem~\ref{2025_12_13_thm2.7}
is satisfied. 
By Remark~\ref{2026_08_05_rem_critical_points},
$d_p^*$ has no critical points 
on $\Sigma\setminus\{p\}$. 
In particular, $\Sigma$ is diffeomorphic to $\R^2$. 
Hence, in view of the standing assumptions 
on $(\Sigma,g^*)$, 
$\Sigma$ is a Riemannian plane 
in the sense of \cite{SST}. 
Therefore, 
Proposition~\ref{SST_cor6.2.2} applies 
to $(\Sigma,g^*)$, and 
Eq.\,\eqref{2025_11_27_thm1.6_eq2} follows.
$\qedd$

%%%%%%%%%%%%%%%%%%%%%%%%%%%%%%%%%%%%
%%%New Part4:Theorem \ref{2025_12_01_thm1.7}の証明%%%
%%%%%%%%%%%%%%%%%%%%%%%%%%%%%%%%%%%%

\subsection{Proof of Theorem \ref{2025_12_01_thm1.7}}

Since $\lambda_\infty^*(\Sigma)>0$, the area growth formula
\eqref{2025_11_27_thm1.4_eq1} implies that
\[
\lim_{t\to\infty}
\frac{\Ar(B_t^*(p))}
{\Ar(B_t(\tilde p))}
>0.
\]
Define
\[
\beta^*
:=
\alpha^*
\lim_{t\to\infty}
\frac{\Ar(B_t^*(p))}
{\Ar(B_t(\tilde p))}.
\]
Then $\beta^*>0$. By the definition of the curvature at infinity
and Eq.\,\eqref{2025_11_27_thm1.4_eq1},
\begin{equation}\label{2025_12_01_thm1.7_proof_eq1}
2\pi\chi(\Sigma)-c^*(\Sigma)
=
\lambda_\infty^*(\Sigma)
=
4\pi\beta^*.
\end{equation}

We prove the two assertions separately.

\medskip
\noindent
\textbf{Proof of Assertion {\rm (1)}.}
Assume that $\beta^*\in(0,1/4)$. Then
Eq.\,\eqref{2025_12_01_thm1.7_proof_eq1} implies that
\[
c^*(\Sigma)
=
2\pi\chi(\Sigma)-4\pi\beta^*
>
2\pi\chi(\Sigma)-\pi.
\]
Therefore, Theorem \ref{thm:Shiohama-Busemann} shows 
that every Busemann function on $(\Sigma,g^*)$ is 
an exhaustion.

Moreover, Eq.\,\eqref{2025_12_01_thm1.7_proof_eq1} implies 
\[
\lambda_\infty^*(\Sigma)
=
4\pi\beta^*
<
\pi
<
2\pi.
\]
Since $\Sigma$ has exactly one end, all the assumptions of
Theorem \ref{SST_thms} are satisfied. Therefore,
Eqs.\,\eqref{2026_07_31_thm1.7_new1}
and \eqref{2026_07_31_thm1.7_new2}
follow from assertions {\rm (a)} and {\rm (b)} of
Theorem \ref{SST_thms}, respectively.

\medskip
\noindent
\textbf{Proof of Assertion {\rm (2)}.} 
Assume that
\[
\beta^*
\in
\left(0,\frac{\chi(\Sigma)}{2}-\frac{1}{4}\right).
\]
Then we have
\begin{equation}\label{2026_08_01_neweq_1}
\frac{1}{2}<\chi(\Sigma).
\end{equation}
On the other hand, since $\Sigma$ has exactly one end and is of
finite topological type, there exist a connected compact surface
$N$ and a point $q\in N$ such that
\[
\Sigma\cong N\setminus\{q\}.
\]
Consequently,
\begin{equation}\label{2026_08_01_neweq_2}
\chi(\Sigma)=\chi(N)-1\le1,
\end{equation}
since $\chi(N)\le2$. Since $\chi(\Sigma)$ is an integer,
Eqs.\,\eqref{2026_08_01_neweq_1} and
\eqref{2026_08_01_neweq_2} yield
\[
\chi(\Sigma)=1.
\]
Thus $\chi(N)=2$. By the classification theorem for compact
surfaces, $N$ is homeomorphic to the $2$-sphere $\Sph^2$.
Hence
\[
\Sigma
\cong
\Sph^2\setminus\{q\}
\cong
\R^2.
\]
Thus, in view of the standing assumptions on $(\Sigma,g^*)$,
$\Sigma$ is a Riemannian plane in the sense of \cite{SST}.
Therefore, Proposition~\ref{SST_cor6.2.2} applies to
$(\Sigma,g^*)$, and Eq.\,\eqref{2025_11_27_thm1.6_eq2}
follows. 

Moreover, since $\chi(\Sigma)=1$, the assumption in {\rm (2)}
reduces to
\[
\beta^*\in(0,1/4).
\]
It therefore follows from Assertion {\rm (1)} that every Busemann
function on $(\Sigma,g^*)$ is an exhaustion. This completes the
proof. $\qedd$

\begin{flushleft}
H.\,Harumoto\\
{\small e-mail: 
{\tt  pfai1uqx@s.okayama-u.ac.jp}}

\medskip

K.\,Kondo\\Department of Mathematics, Faculty of Science, Okayama University\\Okayama City, Okayama Pref. 700-0082, Japan\\{\small e-mail: {\tt keikondo@okayama-u.ac.jp}}

\end{flushleft}

\end{document}